\newtheorem{thm}{Theorem}[section]
\newtheorem{prop}[thm]{Proposition}
\newtheorem{lem}[thm]{Lemma}
\theoremstyle{definition}
\newtheorem{defn}[thm]{Definition}
\theoremstyle{remark}
\newtheorem{rem}[thm]{Remark}
\newtheorem{remark}[thm]{Remark}
\numberwithin{equation}{section}
\newcommand{\abs}[1]{\left\vert#1\right\vert}
\newcommand{\ddbar}{\partial\bar\partial}
\newcommand{\dom}[1]{\mathrm{Dom}(#1)}
\DeclareMathOperator{\re}{\mathrm{Re}}
\DeclareMathOperator{\im}{\mathrm{Im}}
\renewcommand{\L}{\mathcal{L}}
\DeclareMathOperator{\dist}{dist}
\DeclareMathOperator{\rea}{Reach}
\newcommand{\LL}{\bar L}
\newcommand{\R}{\mathbb R}
\newcommand{\N}{\mathbb N}
\newcommand{\C}{\mathbb C}
\DeclareMathOperator{\Rre}{Re}
\DeclareMathOperator{\Dom}{Dom}
\newcommand{\bd}{\textrm{b}}
\newcommand{\p}{\partial}
\newcommand{\z}{\bar z}
\newcommand{\dbar}{\bar\partial}
\newcommand{\dbars}{\bar\partial^*}
\newcommand{\vp}{\varphi}
\newcommand{\atopp}[2]{\genfrac{}{}{0pt}{2}{#1}{#2}}
\newcommand{\ep}{\epsilon}
\newcommand{\I}{\mathcal{I}}
\DeclareMathOperator{\Tr}{Tr}
\newcommand{\bfem}[1]{\textbf{\emph{#1}}}
\newcommand{\la}{\langle}
\newcommand{\ra}{\rangle}
\begin{document}

\title{Closed range of $\dbar$ on unbounded domains in $\C^n$}

\author{Phillip S. Harrington and Andrew Raich}

\thanks{The second author was partially supported by NSF grant DMS-1405100.}

\address{Department of Mathematical Sciences, SCEN 309, 1 University of Arkansas, Fayetteville, AR 72701}
\email{psharrin@uark.edu \\ araich@uark.edu}

\keywords{unbounded domains, weighted Sobolev spaces, defining functions, closed range, $\dbar$-Neumann, weak $Z(q)$, $q$-pseudoconvexity}

\subjclass[2010]{Primary 32W05, Secondary 32F17, 35N15}

\begin{abstract}In this article, we establish a general sufficient condition for closed range of the Cauchy-Riemann operator $\bar\partial$ in appropriately weighted $L^2$ spaces on $(0,q)$-forms for a fixed $q$ on domains in $\mathbb{C}^n$.
The domains we consider may be neither bounded nor pseudoconvex, and our condition is a generalization of the classical $Z(q)$ condition that we call weak $Z(q)$. We provide examples that explain the
necessity of working in weighted spaces for closed range in $L^2$.
\end{abstract}

\maketitle

\section{Introduction}
\label{sec:introduction}

Suppose that $\Omega\subset\mathbb{C}^n$ is a smooth domain that may be neither bounded nor pseudoconvex.  Our goal in this paper is to study sufficient conditions for closed range of the Cauchy-Riemann operator in weighted $L^2$ spaces.  When $\Omega$ is bounded and pseudoconvex, this follows from the classic results of H\"ormander \cite{Hor65}.  Recent work of Herbig and McNeal \cite{HeMc16} studies sufficient conditions for closed range on unbounded pseudoconvex domains in unweighted spaces.

In \cite{HaRa15}, the authors introduced a condition known as weak $Z(q)$ which implies that the Cauchy-Riemann operator has closed range in $L^2_{0,q}$ or $L^2_{0,q+1}$ on bounded domains.  This condition is built on the authors' earlier work in \cite{HaRa11}, and is inspired by related conditions in \cite{Ho91}, \cite{AhBaZa06}, and \cite{Zam08}, as well as the classic $Z(q)$ condition (see \cite{Hor65}, \cite{FoKo72}, \cite{AnGr62}, or \cite{ChSh01}).  We will review our definition of weak $Z(q)$ in Section \ref{sec:weakly_z(q)_domains}, but for now we recall that the special case of $Z(q)$ is the case where the Levi-form has either $q+1$ negative or $n-q$ positive eigenvalues at every boundary point.  On bounded domains, there must be at least one strictly pseudoconvex boundary point, so by continuity a bounded $Z(q)$ domain in $\mathbb{C}^n$ must have at least $n-q$ positive eigenvalues at every boundary point.  Hence, a large class of interesting local examples (those with $q+1$ negative eigenvalues), cannot be realized globally as bounded domains in $\mathbb{C}^n$ (or indeed any Stein manifold).

Such examples might exist when considering unbounded domains.  However, a simple counterexample demonstrates the critical role played by the weight function on such domains.  Suppose there exists a constant $C>0$ such that for every $u\in L^2_{0,q}(\Omega)\cap\Dom(\dbar)\cap\Dom(\dbars)$, we have the closed range estimate
\[
  \|u\|\leq C(\|\dbar u\|+\|\dbars u\|),
\]
where $\dbars$ is the $L^2$ adjoint of $\dbar$ (see Section \ref{sec:weakly_z(q)_domains} for details on the notation).  Suppose that for every $R>0$, there exists $z_R\in\Omega$ such that $B(z_R,R)\subset\Omega$.  This is possible even on strictly pseudoconvex unbounded domains such as the half-space bounded by the Heisenberg group: $\Omega=\{z\in\mathbb{C}^n:\im z_n>|z_1|^2+\cdots+|z_{n-1}|^2\}$.  Let $u_1\in C^\infty_{0,(0,q)}(B(0,1))$ be nontrivial, and define $u_R(z)=\frac{1}{R^n}u_1\left(\frac{z-z_R}{R}\right)$.  Then $u_R\in C^\infty_{0,(0,q)}(B(z_R,R))\subset C^\infty_{0,(0,q)}(\Omega)$.  By assumption,
\[
  \|u_1\|=\|u_R\|\leq C(\|\dbar u_R\|+\|\dbars u_R\|)=R^{-1}C(\|\dbar u_1\|+\|\dbars u_1\|).
\]
Since this must hold for every $R>0$, we have a contradiction.  Thus, closed range estimates in $L^2$ are impossible on many unbounded domains, so we must consider weighted $L^2$ spaces to obtain closed range estimates.

With the tools of \cite{HaRa15} in place, the present paper establishes sufficient conditions for closed range of the Cauchy-Riemann complex on a large class of unbounded domains.  We introduce our key definitions to classify the domains under consideration in Section \ref{sec:weakly_z(q)_domains}.  Section \ref{sec:basic estimate, proof of L^2 case} will outline the key computations from \cite{HaRa15} to prove closed range in weighted $L^2$ spaces.  In Section \ref{sec:closed_range_in_unweighted_spaces} we briefly outline the applications of our techniques to closed range in unweighted $L^2$ spaces, as a special case of the results in \cite{HeMc16}.  We conclude with the construction of examples satisfying our hypotheses in Section \ref{sec:examples}.

\section{Weakly $Z(q)$ domains.}
\label{sec:weakly_z(q)_domains}

\subsection{Notation}
Let $\Omega\subset \C^n$ be a domain with $C^m$ boundary $\bd\Omega$.
\begin{defn}\label{defn:uniform_defining_function}We say that a defining function $\rho$ for $\Omega$ is \bfem{uniformly $C^m$} if there exists an open neighborhood $U$ of $\bd\Omega$ such that $\dist(\bd\Omega,\bd U)>0$, $\|\rho\|_{C^m(U)}<\infty$, and $\inf_U |\nabla\rho|>0$.
\end{defn}
This is trivial on domains with compact boundary, but on unbounded domains we provided counterexamples, a large class of examples, and a complete characterization in terms of the signed distance function in \cite{HaRa13}.

We identify real $(1,1)$-forms with a hermitian matrix as follows:
\[
c=\sum_{j,k=1}^{n} i c_{j\bar k}\, dz_j\wedge d\z_k
\]

For a function $\alpha$, we denote $\alpha_k = \frac{\p\alpha}{\p z_k}$ and $\alpha_{\bar j} = \frac{\p\alpha}{\p\z_j}$.

Let $\rho:\C^n\to\R$ be a uniformly $C^m$-defining function for $\Omega$.
We denote the $L^2$-inner product on $L^2(\Omega,e^{-t|z|^2})$ by
\[
(f,g)_t = \int_\Omega \la f, g\ra\, e^{-t|z|^2} dV = \int_\Omega f \bar g\, e^{-t|z|^2} dV
\]
where $\la \cdot,\cdot \ra$ is the standard pointwise inner product on $\C^n$ and $dV$ is Lebesgue measure on $\C^n$. We denote
the induced surface area measure on $\bd\Omega$ by $d\sigma$. Also $\| f \|_t^2 = \int_\Omega |f|^2 e^{-t|z|^2}\, dV$.

Let $\I_q = \{ (i_1,\dots,i_q)\in \N^n : 1 \leq i_1 < \cdots < i_q\leq n \}$. For $I\in\I_{q-1}$, $J\in\I_q$, and $1\leq j \leq n$, let
$\ep^{jI}_{J} = (-1)^{|\sigma|}$ if $\{j\} \cup I = J$ as sets and $|\sigma|$ is the length of the permutation that takes $\{j\}\cup I$ to $J$. Set
$\ep^{jI}_J=0$ otherwise. We use the standard notation that if $u = \sum_{J\in\I_q} u_J\, d\z_J$, then
\[
u_{jI} = \sum_{J\in\I_q} \ep^{jI}_J u_J.
\]

Let $L^{t}_j = \frac{\p}{\p z_j} - t\z_j = e^{t|z|^2}\frac{\p}{\p z_j} e^{-t|z|^2}$ and let $\dbars_t:L^2_{0,q+1}(\Omega,e^{-t|z|^2})\rightarrow L^2_{0,q}(\Omega,e^{-t|z|^2})$ be the $L^2$-adjoint of
$\dbar:L^2_{0,q}(\Omega, e^{-t|z|^2}) \to L^2_{0,q+1}(\Omega, e^{-t|z|^2})$. This means that if
$f = \sum_{J\in\I_q} f_J\, d\z_J$ and $g = \sum_{K\in\I_{q+1}}g_K\, d\z_K\in\Dom(\dbars_t)$, then
\[
\dbar f = \sum_{\atopp{J\in\I_q}{K\in\I_{q+1}}}\sum_{k=1}^n \ep^{kJ}_K \frac{\p f_J}{\p\z_k}\, d\z_K
\qquad\text{and}\qquad
\dbars_t g = -\sum_{J\in\I_{q}} \sum_{j=1}^n L^t_j g_{jJ}\, d\z_J.
\]

The induced CR-structure on $\bd\Omega$  at $z\in\bd\Omega$ is
\[
T^{1,0}_z(\bd\Omega)  = \{ L\in T^{1,0}(\C) : \p\rho(L)=0 \}.
\]
Let $T^{1,0}(\bd\Omega)$ be the space of $C^{m-1}$ sections of $T^{1,0}_z(\bd\Omega)$ and $T^{0,1}(\bd\Omega) = \overline{T^{1,0}(\bd\Omega)}$.
We denote the exterior algebra generated by these spaces by $T^{p,q}(\bd\Omega)$. If $U$ is a suitably small neighborhood of $\bd\Omega$, we
use $\tau$ to denote the orthogonal projection and
restriction
\[
\tau : \Lambda^{p,q}(U)\to \Lambda^{p,q}(\bd\Omega).
\]

If we normalize $\rho$ so that $|d\rho|=1$ on $\bd\Omega$, then the Levi form $\L$ is the real element of $\Lambda^{1,1}(\bd\Omega)$ defined by
\[
\L(-i L\wedge \LL) = i\p\dbar\rho(-iL\wedge\LL)
\]
for any $L\in T^{1,0}(\bd\Omega)$.

\begin{defn}\label{defn:tubular nbhd}Given a set $M\subset\C^n$, a \bfem{tubular neighborhood} of $M$ is an open set $U_r$ of the form
$U_{r} = \{p\in\C^n : \dist(p,M)<r\}$ where $\dist(\cdot,\cdot)$ is the Euclidean distance function. We call $r$ the \bfem{radius} of $U_r$.
\end{defn}

\subsection{Weak $Z(q)$ domains and closed range for $\dbar$}

The following definition was introduced in \cite{HaRa15}, building on ideas in \cite{HaRa11}.
\begin{defn}\label{defn:weak Z(q)}
Let $\Omega\subset \C^n$ be a domain with a uniformly $C^m$ defining function $\rho$, $m\geq 2$. We say $\bd\Omega$ (or $\Omega$) satisfies
\bfem{Z(q) weakly}
if there exists a hermitian matrix $\Upsilon=(\Upsilon^{\bar k j})$ of functions on $b\Omega$ that are uniformly bounded in $C^{m-1}$ such that $\sum_{j=1}^{n}\Upsilon^{\bar k j}\rho_j=0$ on $b\Omega$ and:
\begin{enumerate}\renewcommand{\labelenumi}{(\roman{enumi})}
 \item All eigenvalues of $\Upsilon$ lie in the interval $[0,1]$.

 \item $\mu_1+\cdots+\mu_q-\sum_{j,k=1}^n\Upsilon^{\bar k j}\rho_{j\bar k}\geq 0$ where  $\mu_1,\ldots,\mu_{n-1}$ are the eigenvalues of the Levi form $\L$ in increasing order.

 \item $ \inf_{z\in\bd\Omega} \{ |q-\Tr(\Upsilon)|\} >0$.

\end{enumerate}
\end{defn}
Weak $Z(q)$ is motivated by the basic identity (see \eqref{eq:dbar dbars from Gans} below).  The third term on the right-hand side in \eqref{eq:dbar dbars from Gans} may not be positive if the boundary is not pseudoconvex, so we wish to carry out additional integrations by parts in the first term on the right-hand side to create new boundary terms.  $\Upsilon$ dictates how much we integrate by parts in each direction.  Property (i) in Definition \ref{defn:weak Z(q)} guarantees that the gradient terms corresponding to the first term on the right-hand side of \eqref{eq:dbar dbars from Gans} remain positive.  Property (ii) guarantees that the boundary terms corresponding to the third term on the right-hand side of \eqref{eq:dbar dbars from Gans} also remain positive.  Property (iii) guarantees that the $L^2$ terms corresponding to the second term on the right-hand side of \eqref{eq:dbar dbars from Gans} have a coefficient that is bounded away from zero.  Proposition \ref{prop:basic identity} is the result of this integration by parts.

On bounded domains, the classical $Z(q)$ condition implies weak $Z(q)$.  In a neighborhood of each boundary point, we set $\Upsilon$ equal to the projection onto the span of the negative eigenspaces of the Levi-form.  Property (i) in Definition \ref{defn:weak Z(q)} must be satisfied since $\Upsilon$ is a projection.  For property (ii), we note that if the Levi-form has $p$ positive eigenvalues and $p\geq n-q$, then we have $\mu_1+\cdots+\mu_q-\sum_{j,k=1}^n\Upsilon^{\bar k j}\rho_{j\bar k}=\mu_{n-p}+\cdots+\mu_q>0$, while if $n-1-p\geq q+1$, then we have $\mu_1+\cdots+\mu_q-\sum_{j,k=1}^n\Upsilon^{\bar k j}\rho_{j\bar k}=-\mu_{q+1}-\cdots-\mu_{n-1-p}>0$.  Property (iii) follows since $Z(q)$ domains never have exactly $q$ negative eigenvalues.  These local constructions can be patched together to obtain a global $\Upsilon$.  On unbounded domains, this relationship is less clear, since uniform bounds on the derivatives of the Levi-form do not necessarily imply uniform bounds on the derivatives of the eigenvectors of the Levi-form (especially near points with repeated eigenvalues), but see \cite{HaRa16a} for a partial result.

Our main result is the following:
\begin{thm}\label{thm:closed range for dbar}Let $\Omega\subset \C^n$ be a domain with connected boundary that admits a uniformly $C^m$ defining function, $m\geq 3$, and
satisfies weak $Z(q)$ for some $1\leq q \leq n-1$.  There exists a $T>0$ so that if $q-\Tr(\Upsilon)>0$ and $t\geq T$ or  $q-\Tr(\Upsilon)<0$ and $t\leq -T$, then
\begin{enumerate}\renewcommand{\labelenumi}{(\roman{enumi})}
\item The operator $\dbar: L^2_{0,\tilde q}(\Omega, e^{-t|z|^2})\to L^2_{0,\tilde q+1}(\Omega, e^{-t|z|^2})$ has closed range
for $\tilde q = q-1$ or $q$;
\item The operator $\dbars_t: L^2_{0,\tilde q+1}(\Omega, e^{-t|z|^2})\to L^2_{0,\tilde q}(\Omega, e^{-t|z|^2})$ has closed range
for $\tilde q = q-1$ or $q$;
\item The weighted $\dbar$-Neumann Laplacian defined by $\Box_{q,t} = \dbar\dbars_t + \dbars_t\dbar$ has closed range on $L^2_{0,q}(\Omega, e^{-t|z|^2})$;
\item The weighted $\dbar$-Neumann operator operator $N_{q,t}$  is continuous on $L^2_{0,q}(\Omega, e^{-t|z|^2})$;
\item The operators
\[
  \dbars_t N_{q,t}:L^2_{0,q}(\Omega, e^{-t|z|^2})\to L^2_{0,q-1}(\Omega, e^{-t|z|^2})
\]
and
\[
(\dbar N_{q,t})^*_t : L^2_{0,q+1}(\Omega, e^{-t|z|^2})\to L^2_{0,q}(\Omega, e^{-t|z|^2}),
\]
are continuous;
\item The operators
\[
  \dbar N_{q,t}:L^2_{0,q}(\Omega, e^{-t|z|^2})\to L^2_{0,q+1}(\Omega, e^{-t|z|^2})
\]
and
\[
  (\dbars_t N_{q,t})^*_t : L^2_{0,q-1}(\Omega, e^{-t|z|^2})\to L^2_{0,q}(\Omega, e^{-t|z|^2}),
\]
are continuous;
\item If $\tilde{q}=q$ or $q+1$ and $\alpha\in L^2_{0,\tilde{q}}(\Omega, e^{-t|z|^2})$
so that  $\dbar \alpha =0$, then there exists $u\in L^2_{0,\tilde{q}-1}(\Omega, e^{-t|z|^2})$ so that
\[
\dbar u = \alpha.
\]
\item $\ker(\Box_{q,t}) = \{0\}$. 
\end{enumerate}
\end{thm}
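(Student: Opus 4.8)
The plan is to deduce (viii) directly from the unconditional basic $L^2$ estimate that underlies parts (i)--(vii). First I would unwind the definition of $\Dom(\Box_{q,t})$: if $u\in\ker(\Box_{q,t})$, then $u\in\Dom(\dbar)\cap\Dom(\dbars_t)$, $\dbars_t u\in\Dom(\dbar)$, and $\dbar u\in\Dom(\dbars_t)$, so
\[
0=(\Box_{q,t}u,u)_t=(\dbar\dbars_t u,u)_t+(\dbars_t\dbar u,u)_t=\|\dbars_t u\|_t^2+\|\dbar u\|_t^2 ,
\]
the two integrations by parts being licensed precisely by the membership conditions just listed. Hence $\dbar u=0$ and $\dbars_t u=0$. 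In particular $u\in\Dom(\dbar)\cap\Dom(\dbars_t)$ lies in the graph-norm closure of the $(0,q)$-forms smooth up to $\bd\Omega$ on which the basic estimate is first established, so that estimate applies to $u$.

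Next I would feed $u$ into that estimate. The point of the weight $e^{-t|z|^2}$, together with hypothesis (iii) of Definition~\ref{defn:weak Z(q)} and the $\Upsilon$-integration by parts of Proposition~\ref{prop:basic identity}, is that for $|t|\ge T$ with the sign prescribed in the theorem one has, for \emph{every} $u\in\Dom(\dbar)\cap\Dom(\dbars_t)$ of bidegree $(0,q)$ and with no orthogonality restriction,
\[
c\,|t|\,\|u\|_t^2\le\|\dbar u\|_t^2+\|\dbars_t u\|_t^2
\]
for a fixed $c>0$: the weight contributes an $L^2$ term whose coefficient, after the $\Upsilon$-reduction, is $|q-\Tr(\Upsilon)|\,|t|$ up to errors bounded by the uniform $C^{m-1}$ norms of $\rho$ and $\Upsilon$, and by (iii) this beats those errors once $|t|$ is large enough. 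Since $\dbar u=\dbars_t u=0$, the right-hand side vanishes, so $c|t|\|u\|_t^2\le 0$, and as $|t|\ge T>0$ we get $u=0$. Therefore $\ker(\Box_{q,t})=\{0\}$.

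I do not anticipate a genuine obstacle; the only care needed is to quote the \emph{right} form of the basic estimate, namely the one valid for all $u$ in the domain (not merely on the orthogonal complement of the harmonic space) and with a strictly positive coefficient on $\|u\|_t^2$ --- i.e. with no uncompensated $-C\|u\|_t^2$ surviving on the right. That is exactly the gain supplied by the large-$|t|$ hypothesis, and it is the same mechanism that makes the $L^2$ Dolbeault cohomology vanish: once (viii) is in hand, a $\dbar$-closed $\alpha\in L^2_{0,q}(\Omega,e^{-t|z|^2})$ splits, by the closed range of part~(i), as $\alpha_1\in\Ran(\dbar)$ plus $\alpha_2\in\ker(\dbars_t)$, and then $\alpha_2\in\ker(\dbar)\cap\ker(\dbars_t)=\ker(\Box_{q,t})=\{0\}$, recovering the $\tilde q=q$ case of~(vii). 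So in the logical order of the proof, (viii) should be established first and then used to obtain the exactness statement, rather than the reverse.
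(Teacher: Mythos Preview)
Your proposal is correct and matches the paper's approach. The paper establishes the basic estimate (Proposition~\ref{prop:basic estimate}) --- for $|t|$ large with the appropriate sign, $\|\dbar f\|_\varphi^2+\|\dbars_\varphi f\|_\varphi^2\ge C\|f\|_\varphi^2$ for every $f\in L^2_{0,q}(\Omega,e^{-\varphi})\cap\Dom(\dbar)\cap\Dom(\dbars_\varphi)$ --- and then simply states that all of (i)--(viii) follow by standard arguments, citing \cite{HaRa11,Str10}; your argument for (viii) is exactly the relevant piece of those standard arguments, and your observation about the logical order of (vii) and (viii) is also right.
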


\begin{remark}
The hypothesis on $U_{\bd\Omega}$ ensures that there exists an $r>0$ so that
if $p\in\bd\Omega$, then $B(p,r)\cap \bd\Omega$ is connected. This guarantees that we can move away from the boundary a uniform distance
without intersecting another piece of $\bd\Omega$.
\end{remark}

\begin{remark}
  When $q-\Tr(\Upsilon)<0$, the Levi-form of $\Omega$ must have at least $q+1$ nonpositive eigenvalues at every boundary point, so $\Omega$ must be  very large  (i.e., this Levi-signature can not be globally realized on a bounded domain).  This is also the case when our weight is very large because the exponent is positive, so the space $L^2(\Omega,e^{-t|z|^2})\cap\ker\dbar$ is probably very small.  Since we are dealing with $q\geq 1$, we at least know that $L^2_{(0,q)}(\Omega,e^{-t|z|^2})\cap\ker\dbar$ is nontrivial: simply choose a smooth, compactly supported $(0,q-1)$-form $\psi$ and consider $\dbar\psi$.
\end{remark}

\section{The basic estimate}\label{sec:basic estimate, proof of L^2 case}
In our proof of Theorem \ref{thm:closed range for dbar}, we will use the weight $\vp=t|z|^2$.  We would also like to consider applications of more general weight functions.
For a generic $C^2$ weight $\varphi$, the final (non-error term) in Proposition \ref{prop:basic identity} below would be
\[
  \sum_{I\in\I_{q-1}}\sum_{j,k=1}^n \big(\varphi_{j\bar k} f_{jI}, f_{kI}\big)_\varphi-\sum_{J\in\I_{q}}\sum_{j,k=1}^n \big(\varphi_{j\bar k}\Upsilon^{\bar k j} f_{J}, f_{J}\big)_\varphi
\]
To keep this term positive for all $f$, we would need to replace (iii) in Definition \ref{defn:weak Z(q)} with
\[
  \inf_{z\in b\Omega}\lambda_1+\cdots+\lambda_q-\sum_{j,k=1}^n\varphi_{j\bar k}\Upsilon^{\bar k j}>0,
\]
where $\lambda_1,\ldots,\lambda_n$ are the eigenvalues of $\varphi_{j\bar k}$ arranged in increasing order (see the definition of $q$-compatible functions in \cite{HaRa11}).  To avoid this technicality and keep the exposition along the lines of \cite{HaRa15}, we will restrict to weights which generate a multiple of the Euclidean K\"ahler form, i.e., $\ddbar\varphi=t\ddbar|z|^2$.  Thus, $\lambda_1=\cdots=\lambda_n=t$ and $\sum_{j,k=1}^n\varphi_{j\bar k}\Upsilon^{\bar k j}=t\Tr(\Upsilon)$.  We will see that this still allows us enough flexibility to consider some interesting special cases in Section \ref{sec:closed_range_in_unweighted_spaces}.

Recall that the signed distance function $\tilde\delta$ is defined by $\tilde\delta=\delta$ outside $\Omega$ and $\tilde\delta=-\delta$ inside $\Omega$.  As shown by Theorem 1.3 in \cite{HaRa13}, if any defining function $\rho$ satisfies the hypotheses of Theorem \ref{thm:closed range for dbar}, then the signed distance function will satisfy these hypotheses.  We also note that the signed distance function always satisfies $|\nabla\tilde\delta|=1$ where defined, so we can use $\tilde\delta_{j\bar k}$ to compute the normalized Levi-form.

We first show that  $\Upsilon$ defined on the boundary in Definition \ref{defn:weak Z(q)} can be extended to all of $\mathbb{C}^n$ in a uniform way.
\begin{lem}\label{lem:extension of Upsilon}
Suppose $\Omega$ has a connected boundary, a uniformly $C^m$ defining function for some $m\geq 2$, and satisfies weak $Z(q)$ for some $1\leq q\leq n-1$.
Let $\Upsilon$ be as in Definition \ref{defn:weak Z(q)}. There exists a hermitian matrix
$\tilde\Upsilon$ of functions on $\mathbb{C}^n$ that are uniformly bounded in $C^{m-1}$ satisfying
\begin{enumerate}\renewcommand{\labelenumi}{(\roman{enumi})}
\item All eigenvalues of $\tilde\Upsilon$ lie in the interval $[0,1]$.

\item $\tilde\Upsilon|_{b\Omega}=\Upsilon$, so that $\mu_1+\cdots+\mu_q- \sum_{j,k=1}^n\tilde\Upsilon^{\bar k j}\tilde\delta_{j\bar k}\geq 0$ on $\bd\Omega$ where $\mu_1,\ldots,\mu_{n-1}$ are the eigenvalues of the Levi form in increasing order.

\item $\inf_{z\in\bar\Omega} \{ |q-\Tr(\tilde\Upsilon)|\} >0$.

\item There exists $\ep>0$ so that on the neighborhood $U_\ep$ of $\bd\Omega$ we have
 \begin{equation}\label{eqn:tau rho_k sum=0}
 \sum_{j=1}^n \tilde\Upsilon^{\bar k j}\tilde\delta_j=0.
 \end{equation}
\end{enumerate}
\end{lem}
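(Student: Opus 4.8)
The plan is to construct $\tilde\Upsilon$ in two stages: first extend $\Upsilon$ off $\bd\Omega$ in a controlled way so that property (iv) holds near the boundary, then interpolate this local extension with a far-field choice that makes property (iii) hold globally. The starting point is the signed distance function $\tilde\delta$, which by Theorem 1.3 of \cite{HaRa13} inherits uniform $C^m$ bounds and satisfies $|\nabla\tilde\delta|=1$ on a tubular neighborhood $U_{\ep_0}$ of $\bd\Omega$; in particular $\sum_j \tilde\delta_j \tilde\delta_{\bar j} \equiv 1$ there, so $(\tilde\delta_j)$ is a nonvanishing normal covector field with uniformly bounded derivatives.

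For the first stage, I would take an arbitrary uniformly-$C^{m-1}$ hermitian extension of $\Upsilon$ to $U_{\ep_0}$ — for instance, pull back along the nearest-point projection $\pi:U_{\ep_0}\to\bd\Omega$, which is uniformly $C^{m-1}$ on a possibly smaller neighborhood since $\tilde\delta$ is uniformly $C^m$ — call it $\Upsilon^{(0)}$. This agrees with $\Upsilon$ on $\bd\Omega$ and has eigenvalues in $[0,1]$ there, but will only approximately annihilate $\nabla\tilde\delta$ off the boundary. To fix property (iv) exactly, I would project out the normal direction: set $P = I - \partial\tilde\delta \otimes \overline{\partial\tilde\delta}$ (the orthogonal projection onto $T^{1,0}$ of the level sets of $\tilde\delta$) and define $\tilde\Upsilon = P \Upsilon^{(0)} P$ on $U_{\ep_0}$. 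Then $\sum_j \tilde\Upsilon^{\bar k j}\tilde\delta_j = 0$ identically on $U_{\ep_0}$, giving (iv); on $\bd\Omega$ we have $P\Upsilon P = \Upsilon$ since $\Upsilon$ already annihilated $\nabla\rho$ (and $\nabla\tilde\delta \parallel \nabla\rho$ there), giving (ii); and $\tilde\Upsilon$ is still hermitian with uniformly bounded $C^{m-1}$ norm. The eigenvalue bound (i) persists because conjugating a matrix with eigenvalues in $[0,1]$ by an orthogonal projection keeps eigenvalues in $[0,1]$ — indeed $0 \le P\Upsilon^{(0)}P \le P \le I$ in the order of hermitian forms.

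For the second stage I must arrange property (iii), $\inf_{\bar\Omega}|q - \Tr(\tilde\Upsilon)| > 0$, globally rather than just near $\bd\Omega$. Property (iii) of Definition \ref{defn:weak Z(q)} gives a sign $\sigma\in\{+1,-1\}$ and a constant $c_0>0$ with $\sigma(q - \Tr(\Upsilon)) \ge c_0$ on $\bd\Omega$; by continuity and the uniform $C^{m-1}$ bound this extends (shrinking $\ep_0$) to $\sigma(q - \Tr(\tilde\Upsilon)) \ge c_0/2$ on $U_{\ep_0/2}$. Away from $\bd\Omega$ I would simply take $\tilde\Upsilon$ to be a constant multiple of the identity that has the right trace sign — e.g. $\tilde\Upsilon_\infty = 0$ if $\sigma = +1$ (so $\Tr = 0 < q$), or $\tilde\Upsilon_\infty = I$ if $\sigma = -1$ (so $\Tr = n \ge q+1 > q$, using $q \le n-1$) — noting that both choices satisfy (i) trivially and make (iii) hold with the same sign $\sigma$. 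Then choose a fixed cutoff $\chi \in C^\infty(\C^n)$ with $\chi \equiv 1$ on $U_{\ep_0/4}$, $\supp\chi \subset U_{\ep_0/2}$, and bounded derivatives, and set $\tilde\Upsilon := \chi \tilde\Upsilon_{\mathrm{near}} + (1-\chi)\tilde\Upsilon_\infty$. This is a convex combination of hermitian matrices with eigenvalues in $[0,1]$, hence still satisfies (i); it equals $\tilde\Upsilon_{\mathrm{near}}$ (hence $\Upsilon$) on $\bd\Omega$, giving (ii) and (iv) (the support of $1-\chi$ misses $U_{\ep_0/4}$, so (iv) is unaffected near $\bd\Omega$); and since $\Tr$ is linear, $\sigma(q - \Tr\tilde\Upsilon) = \chi \cdot \sigma(q-\Tr\tilde\Upsilon_{\mathrm{near}}) + (1-\chi)\cdot\sigma(q - \Tr\tilde\Upsilon_\infty) \ge \min(c_0/2,\, \text{dist of }\Tr\tilde\Upsilon_\infty\text{ to }q) > 0$ everywhere on $\bar\Omega$, giving (iii). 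The $C^{m-1}$ bounds are preserved throughout since every operation (nearest-point pullback, projection by $P$, multiplication by a fixed bounded cutoff) has uniformly bounded effect, using the uniform $C^m$ control on $\tilde\delta$ from \cite{HaRa13}.

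The main obstacle is the uniformity of the nearest-point projection $\pi$ and hence of $\Upsilon^{(0)}$: on an unbounded domain one must verify that the reach of $\bd\Omega$ is bounded below and that the $C^{m-1}$ norm of $\pi$ on a fixed tubular neighborhood is finite, which is exactly the content one extracts from the uniform $C^m$ hypothesis on a defining function via the signed-distance characterization in \cite{HaRa13}. Everything else — the projection construction for (iv), the eigenvalue-monotonicity argument for (i), and the cutoff interpolation for (iii) — is local and routine once that uniformity is in hand. A secondary technical point is checking that $P\Upsilon P = \Upsilon$ exactly on $\bd\Omega$ (not merely up to a controllable error), which uses the hypothesis $\sum_j \Upsilon^{\bar k j}\rho_j = 0$ together with the fact that on $\bd\Omega$, $\nabla\tilde\delta$ and $\nabla\rho$ are parallel after normalization.
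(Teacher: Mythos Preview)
Your proposal is correct and follows essentially the same route as the paper: extend $\Upsilon$ by pulling back along the nearest-point projection $\pi$ (whose uniform $C^{m-1}$ regularity comes from the positive-reach/signed-distance results in \cite{HaRa13} and Federer), then interpolate via a cutoff with the constant matrix $0$ or $I$ according to the sign of $q-\Tr\Upsilon$. The one difference is your projection step $P\Upsilon^{(0)}P$: in fact it is unnecessary, because Federer's identity $\nabla\tilde\delta(\pi(p))=\nabla\tilde\delta(p)$ on $U_{2\ep}$ means the pullback $\Upsilon^{(0)}(p)=\Upsilon(\pi(p))$ already annihilates $\partial\tilde\delta$ \emph{exactly} off the boundary, so $P\Upsilon^{(0)}P=\Upsilon^{(0)}$ and your ``approximate'' concern does not arise.
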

\begin{proof}
Since $b\Omega$ is uniformly $C^2$, it has positive reach (see Lemma 2.3 in \cite{HaRa13}), so for $0<\ep<\frac{1}{2}\rea(b\Omega)$ and $p\in U_{2\ep}$, the map $\pi : p \mapsto \bd\Omega$ mapping $p$ to $\pi(p)$ where
$\pi(p)$ is the unique point in $\bd\Omega$ satisfying $|p-\pi(p)| = \dist(p,\bd\Omega)$ is well-defined.  By Lemma 2.4 in \cite{HaRa13}, the signed distance function $\tilde\delta$ is uniformly $C^m$ on $U_{2\ep}$.  Since Theorem 4.8 (3) and (5) in \cite{Fed59} imply $\pi(p)=p-\tilde\delta(p)\nabla\tilde\delta(p)$ on $U_{2\ep}$, we conclude that $\pi_p$ is uniformly $C^{m-1}$ on $U_{2\ep}$.

Define a cutoff function $\psi\in C^\infty(\C^n,[0,1])$ so that $\psi\big|_{U_\ep}\equiv 1$ and $\psi\big|_{U_{2\ep}^c} \equiv 0$. If
$q-\Tr(\Upsilon)>0$ and $p\in\C^n$, define
\[
\tilde\Upsilon(p)
= \psi(p)\Upsilon(\pi(p)).
\]
If $\Tr(\Upsilon)-q>0$ and $p\in\mathbb{C}^n$, set
\[
\tilde\Upsilon(p)
= \psi(p)\Upsilon(\pi(p))
+ (1-\psi(p))I.
\]

To prove (\ref{eqn:tau rho_k sum=0}), observe that $\nabla\tilde\delta(\pi(p))=\nabla\tilde\delta(p)$ for $p\in U_{2\ep}$ by Theorem 4.8 (3) in \cite{Fed59} and a continuity argument.
\end{proof}

We will no longer distinguish between $\Upsilon$ and its extension $\tilde\Upsilon$.  We next prove a simple density result, adapting techniques that can be found in \cite{Gan10} and \cite{HeMc16}:

\begin{lem}
\label{lem:bounded_density}
  Let $\Omega\subset\mathbb{C}^n$ be a $C^m$ domain, $m\geq 2$, and let $f\in L^2_{0,q}(\Omega,e^{-\varphi})\cap\dom\dbar\cap\dom{\dbar^*_\varphi}$ for some $C^2$ function $\varphi$.  Then there exists a sequence of bounded $C^m$ domains $\{\Omega_j\}$ and functions $f_j\in C^{m-1}(\Omega)$ such that $\Omega_j\cap B(0,j+2)=\Omega\cap B(0,j+2)$, $f_j\equiv 0$ on $\Omega\backslash B(0,j+2)$, $f_j|_{\Omega_j}\in\dom{\dbar^*_\varphi}$, and
  \begin{multline*}
    \left\|\dbar f_j\right\|_{L^2(\Omega_j,e^{-\varphi})}+\left\|\dbar^*_\varphi f_j\right\|_{L^2(\Omega_j,e^{-\varphi})}+\left\|f_j\right\|_{L^2(\Omega_j,e^{-\varphi})}\\
    \rightarrow \left\|\dbar f\right\|_{L^2(\Omega,e^{-\varphi})}+\left\|\dbar^*_\varphi f\right\|_{L^2(\Omega,e^{-\varphi})}+\left\|f\right\|_{L^2(\Omega,e^{-\varphi})}
  \end{multline*}
\end{lem}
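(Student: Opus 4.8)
The plan is to cut off $f$ both spatially (to make its support bounded, and hence contained in a bounded domain) and to correct the resulting form so that it lies in $\dom{\dbar^*_\varphi}$ on a suitable bounded truncation of $\Omega$. First I would fix a sequence of smooth radial cutoffs $\chi_j$ with $\chi_j\equiv 1$ on $B(0,j+1)$, $\supp\chi_j\subset B(0,j+2)$, and $|\nabla\chi_j|\les 1$ uniformly in $j$. Set $g_j=\chi_j f$. Then $g_j\to f$ in $L^2_{0,q}(\Omega,e^{-\varphi})$, and since $\dbar g_j=\chi_j\,\dbar f+\dbar\chi_j\wedge f$ and $\dbar^*_\varphi g_j=\chi_j\,\dbar^*_\varphi f-(\nabla\chi_j)\lrcorner\, f$ (interior multiplication, using that $\chi_j$ is real so the weight plays no role in the zeroth-order commutator), dominated convergence gives $\dbar g_j\to\dbar f$ and $\dbar^*_\varphi g_j\to\dbar^*_\varphi f$ in $L^2$; note $g_j$ still satisfies the $\dbar^*_\varphi$ boundary condition since multiplication by a function preserves $\dom{\dbar^*_\varphi}$. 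Next I would choose the bounded $C^m$ domain $\Omega_j$: take $\Omega_j=\Omega\cap B(0,R_j)$ for a radius $R_j>j+2$ chosen so that $\bd B(0,R_j)$ meets $\bd\Omega$ transversally (possible for a.e. radius by Sard's theorem applied to $\tilde\delta$ restricted to spheres, or simply by smoothing a corner); since $g_j$ is supported in $B(0,j+2)$, the forms and all three norms are unaffected by whether we integrate over $\Omega$ or $\Omega_j$, so we may as well work on $\Omega_j$. Finally, to produce the $C^{m-1}$ regularity, I would mollify: because $g_j$ is compactly supported in $\Omega\cap B(0,j+2)$ away from $\bd B(0,R_j)$, a standard Friedrichs mollification in a neighborhood of $\bd\Omega$ that respects the boundary (using the $C^m$ defining function to straighten the boundary locally, as in \cite{Gan10} and \cite{HeMc16}) yields $f_j\in C^{m-1}(\overline{\Omega_j})$ with $f_j|_{\Omega_j}\in\dom{\dbar^*_\varphi}$ and with $\dbar f_j$, $\dbar^*_\varphi f_j$, $f_j$ each within $1/j$ of the corresponding $g_j$ quantity in $L^2(\Omega_j,e^{-\varphi})$; extending $f_j$ by zero outside $B(0,j+2)$ keeps it in $C^{m-1}(\Omega)$ because the support stays inside $B(0,j+2)$.

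Combining the two approximations by the triangle inequality gives
\[
  \bigl\|\dbar f_j\bigr\|_{L^2(\Omega_j,e^{-\varphi})}+\bigl\|\dbar^*_\varphi f_j\bigr\|_{L^2(\Omega_j,e^{-\varphi})}+\bigl\|f_j\bigr\|_{L^2(\Omega_j,e^{-\varphi})}
  \longrightarrow
  \bigl\|\dbar f\bigr\|_{L^2(\Omega,e^{-\varphi})}+\bigl\|\dbar^*_\varphi f\bigr\|_{L^2(\Omega,e^{-\varphi})}+\bigl\|f\bigr\|_{L^2(\Omega,e^{-\varphi})},
\]
as required, and the support condition $f_j\equiv 0$ on $\Omega\setminus B(0,j+2)$ together with $\Omega_j\cap B(0,j+2)=\Omega\cap B(0,j+2)$ is built in by construction.

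The main obstacle is the mollification step near $\bd\Omega$: one must smooth $g_j$ while \emph{preserving} membership in $\dom{\dbar^*_\varphi}$ (the complex normal boundary condition $\sigma(\dbar^*_\varphi)\,f=0$, i.e. the contraction of $f$ with $\dbar\rho$ vanishes on $\bd\Omega$) and while keeping control of $\dbar^*_\varphi f_j$ in $L^2$, not merely of $f_j$. This is exactly the Friedrichs-lemma-type argument: flatten the boundary by a local $C^m$ diffeomorphism coming from $\rho$, mollify tangentially, translate slightly inward along the normal to absorb the boundary trace, and patch with a partition of unity; the commutator of the mollifier with $\dbar$ and $\dbar^*_\varphi$ is then lower order and converges to zero in $L^2$ by the classical Friedrichs lemma. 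Because $g_j$ is already compactly supported, the patching is over a compact piece of $\bd\Omega$ and the uniform $C^m$ hypothesis is not even needed here — only the $C^m$ regularity of $\bd\Omega$ locally — which is why the lemma is stated for general $C^m$ domains rather than requiring the uniform defining function.
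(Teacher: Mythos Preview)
Your proposal is correct and follows essentially the same two-step approach as the paper: first truncate $f$ by a radial cutoff to obtain a compactly supported form in $\dom\dbar\cap\dom{\dbar^*_\varphi}$ (the paper uses the explicit cutoff $\chi\big(\tfrac{(r+1)^2-|z|^2}{2r+1}\big)$ and verifies $f_r\in\dom{\dbar^*_\varphi}$ by duality, while you assert directly that multiplication by a smooth real function preserves the domain), then pass to a bounded $C^m$ domain agreeing with $\Omega$ on $B(0,j+2)$ and invoke the standard density result on bounded domains (the paper simply cites Lemma 4.3.2 in \cite{ChSh01}, whereas you sketch the Friedrichs mollification). The only cosmetic difference is that the paper does not attempt to construct $\Omega_j$ explicitly as a smoothed intersection with a ball but merely asserts its existence.
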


\begin{proof}
Let $\chi:\mathbb{R}\rightarrow\mathbb{R}$ be a smooth cutoff function satisfying $\chi(x)\equiv 0$ on $(-\infty,0]$ and $\chi(x)\equiv 1$ on $[1,\infty)$.
For $r>0$ and $z\in\Omega$, let $f_r(z)=\chi\left(\frac{(r+1)^2-|z|^2}{2r+1}\right)f(z)$.  We can easily check that $f_r\in\dom\dbar$ and
\[
  \dbar f_r(z)=\chi\left(\frac{(r+1)^2-|z|^2}{2r+1}\right)\dbar f(z)-\chi'\left(\frac{(r+1)^2-|z|^2}{2r+1}\right)\frac{\dbar|z|^2}{2r+1}\wedge f(z)
\]
almost everywhere.  Since $\chi'\left(\frac{(r+1)^2-|z|^2}{2r+1}\right)$ is supported in $\overline{B(0,r+1)}\backslash B(0,r)$, $\frac{\dbar|z|^2}{2r+1}$ is uniformly bounded on $B(0,r+1)$, and $\left\|f\right\|_{L^2(\Omega\backslash B(0,r),e^{-\varphi})}\rightarrow 0$ as $r\rightarrow\infty$, we have $\dbar f_r\rightarrow\dbar f$ in $L^2(\Omega,e^{-\varphi})$.  Next, we choose $g\in L^2_{0,q-1}(\Omega,e^{-\varphi})\cap\dom\dbar$, and check
\begin{multline*}
  \left(f_r,\dbar g\right)_{L^2(\Omega,e^{-\varphi})}\\
  =\left(f,\dbar\left(\chi\left(\frac{(r+1)^2-|\cdot|^2}{2r+1}\right)g\right)+\chi'\left(\frac{(r+1)^2-|\cdot|^2}{2r+1}\right)\frac{\dbar|\cdot|^2}{2r+1}\wedge g\right)_{L^2(\Omega,e^{-\varphi})},
\end{multline*}
so $|\left(f_r,\dbar g\right)_{L^2(\Omega,e^{-\varphi})}|\leq\left\|\dbar^*_\varphi f\right\|_{L^2(\Omega,e^{-\varphi})}\left\|g\right\|_{L^2(\Omega,e^{-\varphi})}+C\left\| f\right\|_{L^2(\Omega,e^{-\varphi})}\left\|g\right\|_{L^2(\Omega,e^{-\varphi})}$ for some constant $C$ independent of $r$.  Hence, $f_r\in\dom{\dbar^*_\varphi}$, and we can check that $\dbar^*_\varphi f_r\rightarrow \dbar^*_\varphi f$ as $r\rightarrow\infty$ in $L^2(\Omega,e^{-\varphi})$.

Let $\Omega_r$ be a bounded $C^m$ domain satisfying $\Omega_r\cap B(0,r+2)=\Omega\cap B(0,r+2)$.  By standard density results (e.g., Lemma 4.3.2 in \cite{ChSh01}), we can approximate $f_r$ on $\Omega_r$ by a $C^{m-1}$ form with the necessary properties.
\end{proof}

At this point, Proposition \ref{prop:basic estimate} below will follow directly by applying the proof of Proposition 3.1 (3) in \cite{HaRa15} to the form $f_j$ on the domain $\Omega_j$, and then taking limits.  We emphasize that $\Omega_j$ does not necessarily have weak $Z(q)$ boundary, but $f_j$ will vanish in a neighborhood of all boundary points where weak $Z(q)$ fails, so the proof will carry through without problems.  For the sake of clarity, we outline the key steps below.

Since we are assuming $\ddbar\vp = t\ddbar|z|^2$, we have
\[
\sum_{I\in\I_{q-1}} \sum_{j,k=1}^n \la \vp_{j\bar k} f_{jI}, f_{kI} \ra = qt \sum_{J\in I_q} |f_J|^2
\]
and the Morrey-Kohn-H\"ormander identity follows:
\begin{prop}\label{prop:dbar dbars from Gans}
Let $\Omega$ be a bounded domain of class $C^2$ and let $\rho$ be a defining function for $\Omega$ such that
$|\nabla\rho|=1$ on $\bd\Omega$. Then for any $f = \sum_{J\in\I_q} f_J d\bar z_J\in C^1(\overline\Omega)\cap\Dom(\dbars_\varphi)$,
\begin{equation}
\label{eq:dbar dbars from Gans}
\| \dbar f\|_\varphi^2 + \| \dbars_\varphi f \|_\varphi^2
= \sum_{J\in\I_q} \sum_{j=1}^n \Big\| \frac{\p f_J}{\p\z_j} \Big\|_\varphi^2 +qt \sum_{J\in I_q} \|f_J\|^2_\varphi
+\sum_{I\in\I_{q-1}} \sum_{j,k=1}^n \int_{\bd\Omega} \rho_{j\bar k} f_{jI}\overline{f_{kI}} e^{-\varphi}d\sigma.
\end{equation}
\end{prop}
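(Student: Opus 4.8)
This is the weighted Morrey--Kohn--H\"ormander identity, specialized to $\vp=t|z|^2$ at the very end, so I would follow the classical derivation (as in \cite{FoKo72} or \cite{ChSh01}, or the proof of Proposition 3.1 in \cite{HaRa15}). Since every term in \eqref{eq:dbar dbars from Gans} involves at most first derivatives of $f$ (together with $f$ and $\rho_{j\bar k}$ on $\bd\Omega$), I would first reduce by density to $f$ with $C^\infty(\overline\Omega)$ coefficients lying in $\Dom(\dbars_\vp)$, for which the integrations by parts below are literally valid, and then recover the general $C^1$ case by passing to the limit.

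Expanding $\dbar f$ and $\dbars_\vp f$ into components and using the usual combinatorial identity for the exterior derivative gives
\[
\|\dbar f\|_\vp^2=\sum_{J\in\I_q}\sum_{k=1}^n\Big\|\frac{\p f_J}{\p\z_k}\Big\|_\vp^2-\sum_{I\in\I_{q-1}}\sum_{j,k=1}^n\Big(\frac{\p f_{jI}}{\p\z_k},\frac{\p f_{kI}}{\p\z_j}\Big)_\vp,
\]
while, with $L^\vp_j=e^{\vp}\frac{\p}{\p z_j}e^{-\vp}=\frac{\p}{\p z_j}-\vp_j$ (so that $\dbars_\vp f=-\sum_{I\in\I_{q-1}}\big(\sum_j L^\vp_j f_{jI}\big)\,d\z_I$), we have $\|\dbars_\vp f\|_\vp^2=\sum_{I}\sum_{j,k}(L^\vp_j f_{jI},L^\vp_k f_{kI})_\vp$. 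Here I would integrate by parts twice, using that the $(\cdot,\cdot)_\vp$-adjoint of $L^\vp_j$ is $-\frac{\p}{\p\z_j}$ modulo a boundary integral weighted by $\rho_j$, and the commutator $\frac{\p}{\p\z_j}L^\vp_k-L^\vp_k\frac{\p}{\p\z_j}=-\vp_{k\bar j}$, which is what injects the complex Hessian of $\vp$. The net result is that $\|\dbars_\vp f\|_\vp^2$ equals $\sum_{I}\sum_{j,k}\big(\frac{\p f_{jI}}{\p\z_k},\frac{\p f_{kI}}{\p\z_j}\big)_\vp+\sum_{I}\sum_{j,k}(\vp_{j\bar k}f_{jI},f_{kI})_\vp$ plus boundary integrals, so upon adding it to the previous display the interior cross terms cancel exactly, leaving $\sum_{J}\sum_k\|\p f_J/\p\z_k\|_\vp^2+\sum_I\sum_{j,k}(\vp_{j\bar k}f_{jI},f_{kI})_\vp$ together with the boundary integrals.

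Those boundary integrals are of the form $\int_{\bd\Omega}(\cdots)\rho_j$ and $\int_{\bd\Omega}(\cdots)\rho_{\bar k}$, and here I would invoke the defining property of $\Dom(\dbars_\vp)$, namely $\sum_{j=1}^n\rho_j f_{jI}=0$ on $\bd\Omega$ for every $I\in\I_{q-1}$: one of them vanishes outright, and for the other I would write $\sum_j\rho_j f_{jI}=\rho\,a_I$ near $\bd\Omega$, apply $\frac{\p}{\p\z_k}$, and restrict to $\bd\Omega$, converting $\sum_j\rho_j\,\p f_{jI}/\p\z_k$ into $-\sum_j\rho_{j\bar k}f_{jI}$ plus a multiple of $\rho_{\bar k}$ (which is then killed against the conjugate of the same boundary relation). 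What survives is exactly $\sum_{I\in\I_{q-1}}\sum_{j,k}\int_{\bd\Omega}\rho_{j\bar k}f_{jI}\overline{f_{kI}}\,e^{-\vp}\,d\sigma$. Finally, for $\vp=t|z|^2$ one has $\vp_{j\bar k}=t\delta_{jk}$, so by the pointwise identity $\sum_{I}\sum_{j,k}\la\vp_{j\bar k}f_{jI},f_{kI}\ra=qt\sum_{J}|f_J|^2$ recorded just before the statement, $\sum_{I}\sum_{j,k}(\vp_{j\bar k}f_{jI},f_{kI})_\vp=qt\sum_{J\in\I_q}\|f_J\|_\vp^2$, and \eqref{eq:dbar dbars from Gans} follows. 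The one genuinely delicate step I expect is this boundary bookkeeping---pinning down which pieces are removed by $\sum_j\rho_j f_{jI}=0$ and its tangential derivative---the interior integrations by parts being entirely routine.
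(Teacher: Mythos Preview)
Your proposal is correct and follows the classical derivation of the weighted Morrey--Kohn--H\"ormander identity. Note that the paper itself does not supply a proof of this proposition: it simply records the pointwise identity $\sum_{I,j,k}\langle\vp_{j\bar k}f_{jI},f_{kI}\rangle=qt\sum_J|f_J|^2$ and then states that ``the Morrey--Kohn--H\"ormander identity follows,'' treating the result as standard. Your write-up is therefore more detailed than what the paper provides, and the route you take---expand $\|\dbar f\|_\vp^2$ and $\|\dbars_\vp f\|_\vp^2$, integrate by parts using the commutator $[\partial/\partial\bar z_j,L^\vp_k]=-\vp_{k\bar j}$, and handle the boundary terms via the $\dbar$-Neumann condition $\sum_j\rho_j f_{jI}=0$ together with its tangential derivative---is exactly the standard one found in the references the paper cites (\cite{ChSh01}, \cite{FoKo72}, \cite{HaRa15}).
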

This equality is well-suited for pseudoconvex domains but not a general weak $Z(q)$-domain.  For such domains, we need additional integrations by parts in the $\Big\| \frac{\p f_J}{\p\z_j} \Big\|_\varphi^2$ terms to obtain a useful estimate.  The form $\Upsilon$ can be thought of as a rule for integrating by parts in these terms, as we will now demonstrate.

Recall that $L_j^\varphi=e^\varphi\frac{\partial}{\partial z_j}e^{-\varphi}=\frac{\partial}{\partial z_j}-\varphi_j$. The fact that $\Upsilon$ consists entirely of tangential components is recorded in
\eqref{eqn:tau rho_k sum=0} which allows us to integrate by parts twice to obtain
\begin{multline*}
  \sum_{j,k=1}^{n}\left(\Upsilon^{\bar kj}\frac{\partial f_J}{\partial\bar z_k},\frac{\partial f_J}{\partial\bar z_j}\right)_\varphi
=\sum_{j,k=1}^{n}\left(\Upsilon^{\bar kj}L^\varphi_j  f_J,L^\varphi_k  f_J\right)_\varphi-\sum_{j,k=1}^{n}\left(\left(\frac{\partial}{\partial z_j}\Upsilon^{\bar kj}\right)\frac{\partial f_J}{\partial\bar z_k},\frac{\partial f_J}{\partial\bar z_j}\right)_\varphi\\
+\sum_{j,k=1}^{n}\left(\left(\frac{\partial}{\partial\bar z_k}\Upsilon^{\bar kj}\right)L^\varphi_j  f_J,f_J\right)_\varphi+t \big(\Tr(\Upsilon) f_J, f_J\big)_\varphi.
\end{multline*}
Here, we have used $\left[L_j^\varphi,\frac{\partial}{\partial\bar z_k}\right]=\varphi_{j\bar k}=t I_{jk}$ where $I$ is the identity matrix.  To work with the remaining first order terms, we use the decomposition $I=(I-\Upsilon)+\Upsilon$ to break apart each derivative and then use integration by parts as necessary so that the $I-\Upsilon$ component of each derivative is of type $(0,1)$ and the $\Upsilon$ component is of type $(1,0)$.  This leads to the identity
\begin{multline*}
  -\sum_{j,k=1}^{n}\left(\left(\frac{\partial}{\partial z_j}\Upsilon^{\bar kj}\right)\frac{\partial f_J}{\partial\bar z_k},\frac{\partial f_J}{\partial\bar z_j}\right)_\varphi
+\sum_{j,k=1}^{n}\left(\left(\frac{\partial}{\partial\bar z_k}\Upsilon^{\bar kj}\right)L^\varphi_j  f_J,f_J\right)_\varphi\\
=2\Rre\Bigg\{  \sum_{J\in \I_q}\sum_{j,k,\ell=1}^{n}\left(\frac{\partial\Upsilon^{\bar kj}}{\p\z_k}\Upsilon^{\bar j\ell}
  L^\varphi_\ell  f_J,f_J\right)_\varphi
  -\sum_{J\in \I_q}\sum_{j,k,\ell=1}^{n} 				
   \left(\frac{\partial\Upsilon^{\bar kj}}{\partial z_j}(I_{k \ell}-\Upsilon^{\bar\ell k})\frac{\partial f_J}{\partial\bar z_\ell},f_J\right)_\varphi \Bigg\}\\
   -\sum_{j,k=1}^n   \int_{b\Omega} \left\la \Upsilon^{\bar kj}\rho_{j\bar k}  f_J , f_J\right\ra e^{-\varphi} d\sigma+ O (\| f \|_\varphi^2).
\end{multline*}
Note that this integration by parts will introduce second derivatives of $\Upsilon$, so we will need to assume that these are uniformly bounded in order to absorb them in the error term.

Combining these calculations with \eqref{eq:dbar dbars from Gans}, we have the basic identity:
\begin{prop}\label{prop:basic identity} Let $\Omega$ be a bounded domain of class $C^3$ and let $\rho$ be a defining function for $\Omega$ such that
$|\nabla\rho|=1$ on $\bd\Omega$, and let $\Upsilon$ be an $n\times n$ hermitian matrix of $C^2$ functions such that $\sum_{j=1}^n\Upsilon^{\bar k j}\rho_j=0$ on $\bd\Omega$ for all $1\leq k\leq n$.  Suppose $\varphi$ satisfies $\ddbar\varphi=t\ddbar|z|^2$.  Then for any $f = \sum_{J\in\I_q} f_J d\bar z_J\in C^1(\overline\Omega)\cap\Dom(\dbars_\varphi)$,
\begin{align*}
&\| \dbar f\|_\varphi^2 + \| \dbars_\varphi f \|_\varphi^2
=  \sum_{J\in \I_q}\sum_{j,k=1}^{n}\left((I_{jk}-\Upsilon^{\bar kj})\frac{\partial f_J}{\partial\bar z_k},\frac{\partial f_J}{\partial\bar z_j}\right)_\varphi
+\sum_{J\in \I_q}\sum_{j,k=1}^{n}\left(\Upsilon^{\bar kj}L^\varphi_j  f_J,L^\varphi_k  f_J\right)_\varphi \\
&+\sum_{I\in\I_{q-1}} \sum_{j,k=1}^n \int_{\bd\Omega}\big\la \rho_{j\bar k} f_{jI}, f_{kI} \big\ra e^{-\varphi}d\sigma
- \sum_{J\in \I_q}\sum_{j,k=1}^n   \int_{b\Omega} \left\la \Upsilon^{\bar kj}\rho_{j\bar k}  f_J , f_J\right\ra e^{-\varphi} d\sigma \\
&+ 2\Rre\Bigg\{  \sum_{J\in \I_q}\sum_{j,k,\ell=1}^{n}\left(\frac{\partial\Upsilon^{\bar kj}}{\p\z_k}\Upsilon^{\bar j\ell}
  L^\varphi_\ell  f_J,f_J\right)_\varphi
  -\sum_{J\in \I_q}\sum_{j,k,\ell=1}^{n} 				
   \left(\frac{\partial\Upsilon^{\bar kj}}{\partial z_j}(I_{k \ell}-\Upsilon^{\bar\ell k})\frac{\partial f_J}{\partial\bar z_\ell},f_J\right)_\varphi \Bigg\}\\
&+ \sum_{J\in\I_q} t \big((q-\Tr(\Upsilon)) f_J, f_J\big)_\varphi + O (\| f \|_\varphi^2),
\end{align*}
where $O(\|f\|_{\varphi}^2) \leq C(\|\Upsilon\|_{C^1}+\|\Upsilon\|_{C^2}^2) \| f \|_\varphi^2$ and $I$ is the identity matrix.
\end{prop}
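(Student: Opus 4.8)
The plan is to derive everything from the Morrey--Kohn--H\"ormander identity \eqref{eq:dbar dbars from Gans} of Proposition \ref{prop:dbar dbars from Gans}, which already incorporates the hypothesis $\ddbar\varphi=t\ddbar|z|^2$ (so that the curvature contribution is the clean term $qt\sum_J\|f_J\|_\varphi^2$) and which holds for bounded $C^2$ domains and $f\in C^1(\overline\Omega)\cap\Dom(\dbars_\varphi)$. Since $\Omega$ is bounded no exhaustion is needed, and since the intermediate manipulations momentarily involve second derivatives of $f$, one first proves the identity for $f$ smooth up to $\overline\Omega$ and then passes to $f\in C^1(\overline\Omega)\cap\Dom(\dbars_\varphi)$ by the standard density result (Lemma 4.3.2 of \cite{ChSh01}). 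The entire content is to rewrite the single ``wrong-type'' term $\sum_{J}\sum_{j}\|\p f_J/\p\z_j\|_\varphi^2$ on the right of \eqref{eq:dbar dbars from Gans}; this is exactly the computation carried out for Proposition 3.1(3) of \cite{HaRa15}.

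Inside that term write $I_{jk}=(I_{jk}-\Upsilon^{\bar kj})+\Upsilon^{\bar kj}$. The $(I-\Upsilon)$-piece is kept verbatim: it is the first term on the right of Proposition \ref{prop:basic identity}, and property (i) of Definition \ref{defn:weak Z(q)} is what makes it nonnegative later. For the $\Upsilon$-piece, $\sum_{j,k}(\Upsilon^{\bar kj}\p f_J/\p\z_k,\p f_J/\p\z_j)_\varphi$, integrate by parts twice: once to pass $\p/\p\z_j$ to the other factor, and once to convert the resulting $\p/\p z_j\,\p/\p\z_k$ using $\p/\p z_j=L^\varphi_j+\varphi_j$ together with the commutator $[L^\varphi_j,\p/\p\z_k]=\varphi_{j\bar k}=t\,I_{jk}$. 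Each of the two boundary integrals so produced carries a contraction of $\Upsilon$ against $\nabla\rho$ of the form $\sum_j\Upsilon^{\bar kj}\rho_j$ or its conjugate, both of which vanish on $\bd\Omega$ by the hypothesis on $\Upsilon$ (using hermiticity for the conjugate), and the commutator produces a zeroth-order contribution proportional to $t\,\Tr(\Upsilon)(f_J,f_J)_\varphi$. This is the first multiline identity displayed just before Proposition \ref{prop:basic identity}, expressing the $\Upsilon$-piece as $\sum_{j,k}(\Upsilon^{\bar kj}L^\varphi_jf_J,L^\varphi_kf_J)_\varphi$ plus a trace term plus two residual terms that each carry a single derivative of $\Upsilon$.

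Those two residual terms are processed exactly as in the second multiline display: in each, split $I=(I-\Upsilon)+\Upsilon$ in the derivative that still hits $f_J$, and integrate by parts so that the $(I-\Upsilon)$-component stays a $\dbar$-derivative while the $\Upsilon$-component becomes an $L^\varphi$-derivative; tangency of $\Upsilon$ is used one more time, and it is here that the Levi-type boundary integral $-\sum_{j,k}\int_{\bd\Omega}\la\Upsilon^{\bar kj}\rho_{j\bar k}f_J,f_J\ra e^{-\varphi}\,d\sigma$ is shed. What survives are precisely the two $2\Rre\{\,\cdots\,\}$ terms written in Proposition \ref{prop:basic identity} (one quadratic in $\Upsilon$ carrying an $L^\varphi$-derivative of $f$, one carrying $\p\Upsilon$, an $I-\Upsilon$ factor, and a $\dbar$-derivative of $f$), together with terms in which derivatives of $\Upsilon$ are paired only with $f_J$ and with functions bounded on $\overline\Omega$ (such as $\varphi_j$); by the Cauchy--Schwarz inequality---with a small-constant/large-constant split used to dispose of the cross terms containing second derivatives of $\Upsilon$, which is the origin of the quadratic dependence $\|\Upsilon\|_{C^2}^2$---these collect into the error $O(\|f\|_\varphi^2)\le C(\|\Upsilon\|_{C^1}+\|\Upsilon\|_{C^2}^2)\|f\|_\varphi^2$. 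Substituting the two multiline identities back into \eqref{eq:dbar dbars from Gans} and collecting: the two boundary integrals ($\sum_I\sum_{j,k}\int_{\bd\Omega}\rho_{j\bar k}f_{jI}\overline{f_{kI}}e^{-\varphi}d\sigma$ and $-\sum_J\sum_{j,k}\int_{\bd\Omega}\la\Upsilon^{\bar kj}\rho_{j\bar k}f_J,f_J\ra e^{-\varphi}d\sigma$) survive as stated, and the $qt\sum_J\|f_J\|_\varphi^2$ of \eqref{eq:dbar dbars from Gans} combines with the trace term to give $\sum_J t((q-\Tr(\Upsilon))f_J,f_J)_\varphi$. This is the asserted identity.

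The strategy is thus entirely dictated by the structure of \eqref{eq:dbar dbars from Gans} and the role of $\Upsilon$; the work---and the one place a slip is easy---is the bookkeeping of the two integrations by parts: tracking conjugations and index contractions through every step, checking that each boundary term that must cancel does so \emph{solely} from $\sum_j\Upsilon^{\bar kj}\rho_j=0$ on $\bd\Omega$ (so that $\Upsilon$ need only be defined on $\bd\Omega$ at this stage, not yet extended off it), and verifying that the several $\Tr(\Upsilon)$ contributions arising from the commutators net out to the single coefficient $q-\Tr(\Upsilon)$. The only other point needing a word is the regularity gap noted above: the hypothesis gives $f\in C^1(\overline\Omega)$ while the intermediate steps see $\p^2 f$, so one argues first for $f$ smooth up to the boundary and then invokes density.
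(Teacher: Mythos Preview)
Your proposal is correct and follows essentially the same approach as the paper: the derivation is precisely the two multiline computations displayed just before the proposition (splitting $I=(I-\Upsilon)+\Upsilon$, integrating the $\Upsilon$-piece by parts twice using tangency to kill the boundary terms, then resplitting the residual first-order terms to produce the $2\Rre\{\cdots\}$ contributions and the remaining Levi boundary integral), and you have tracked the bookkeeping accurately. Your remark about first proving the identity for smooth $f$ and then passing to $C^1(\overline\Omega)\cap\Dom(\dbars_\varphi)$ by density is a worthwhile clarification that the paper leaves implicit.
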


Henceforth we use the signed distance function as our defining function, so $\rho=\tilde\delta$.  When $\bd\Omega$ satisfies weak $Z(q)$, by Lemma \ref{lem:extension of Upsilon} we have
\begin{multline}\label{eqn:boundary term control}
 \sum_{I\in\I_{q-1}} \sum_{j,k=1}^n \big\la  \tilde\delta_{j\bar k} f_{jI}, f_{kI}  \big\ra
-\sum_{J\in \I_q}\sum_{j,k=1}^{n}\big\la \Upsilon^{\bar kj}\tilde\delta_{j\bar k}  f_J,f_J\big\ra \\
\geq (\mu_1 + \cdots + \mu_q)|f|^2 - |f|^2 \sum_{j,k=1}^n  \Upsilon^{\bar kj}\tilde\delta_{j\bar k} \geq0.
\end{multline}
Using linear algebra, we can show:
\begin{lem}\label{lem:G_j bounds E_j}
If $\Upsilon$ satisfies (i) of Lemma \ref{lem:extension of Upsilon}, then
\begin{multline*}
\sum_{J\in \I_q}\sum_{j,k=1}^{n}\left((I_{jk}-\Upsilon^{\bar kj})\frac{\partial f_J}{\partial\bar z_k},\frac{\partial f_J}{\partial\bar z_j}\right)_\varphi
 - 2\Rre\Bigg\{\sum_{J\in \I_q}\sum_{j,k,\ell=1}^{n} 				
   \left(\frac{\partial\Upsilon^{\bar kj}}{\partial z_j}(I_{k \ell}-\Upsilon^{\bar\ell k})\frac{\partial f_J}{\partial\bar z_\ell},f_J\right)_\varphi \Bigg\}\geq\\
    \frac 12 \sum_{J\in \I_q}\sum_{j,k=1}^{n}\left((I_{jk}-\Upsilon^{\bar kj})\frac{\partial f_J}{\partial\bar z_k},\frac{\partial f_J}{\partial\bar z_j}\right)_\varphi
 - O(\|f\|_\varphi^2)
\end{multline*}
and
\begin{multline*}
\sum_{J\in \I_q}\sum_{j,k=1}^{n}\left(\Upsilon^{\bar kj}L^\varphi_j  f_J,L^\varphi_k  f_J\right)_\varphi + 2\Rre\Bigg\{  \sum_{J\in \I_q}\sum_{j,k,\ell=1}^{n}\left(\frac{\partial\Upsilon^{\bar kj}}{\p\z_k}\Upsilon^{\bar j\ell}
  L^\varphi_\ell  f_J,f_J\right)_\varphi
  \Bigg\} \geq\\
   \frac 12 \sum_{J\in \I_q}\sum_{j,k=1}^{n}\left(\Upsilon^{\bar kj}L^\varphi_j  f_J,L^\varphi_k  f_J\right)_\varphi - O(\|f\|_\varphi^2)
\end{multline*}
where $O(\|f\|^2_\varphi)\leq C \|\Upsilon\|_{C^1}\|f\|_\varphi^2$ for some constant $C$.
\end{lem}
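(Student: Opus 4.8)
The two displayed inequalities have identical structure: the second is obtained from the first by replacing $I-\Upsilon$ with $\Upsilon$, the operator $\partial/\partial\z_j$ with $L^\varphi_j$, and the coefficient $\sum_{j}\frac{\partial}{\partial z_j}\Upsilon^{\bar kj}$ with $\sum_{k}\frac{\partial}{\partial\z_k}\Upsilon^{\bar kj}$. So I would prove only the first and note that the second follows by the same computation. The plan is to reduce to a pointwise estimate, for each fixed $J\in\I_q$ and each $z\in\Omega$. Fix such a $J$ and write $\xi=(\partial f_J/\partial\z_1,\dots,\partial f_J/\partial\z_n)$ for the $\dbar$-gradient and $M=(I_{jk}-\Upsilon^{\bar kj})_{j,k}$. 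Since $\Upsilon$ is Hermitian with spectrum in $[0,1]$, $M$ is Hermitian with spectrum in $[0,1]$, so the integrand $G_J:=\langle M\xi,\xi\rangle$ of the first term on the left is nonnegative. The first thing to check is the index bookkeeping: because $\Upsilon\in C^1$ uniformly, the vector $d$ with entries $d_k=\sum_{j=1}^n\frac{\partial}{\partial z_j}\Upsilon^{\bar kj}$ satisfies $|d|\le C\|\Upsilon\|_{C^1}$ pointwise, and the integrand of the cross term is exactly $2\Rre\big(\langle M\xi,\bar d\rangle\,\overline{f_J}\big)$ --- i.e.\ the matrix $(I_{k\ell}-\Upsilon^{\bar\ell k})$ sitting between $\frac{\partial}{\partial z_j}\Upsilon^{\bar kj}$ and $\frac{\partial f_J}{\partial\z_\ell}$ is precisely the Hermitian matrix $M$ that controls $G_J$.

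With this identification, the estimate is a Peter--Paul argument based on the generalized Cauchy--Schwarz inequality for the positive semidefinite sesquilinear form $(u,v)\mapsto\langle Mu,v\rangle$: pointwise
\[
  \big|\langle M\xi,\bar d\rangle\big|\le\langle M\xi,\xi\rangle^{1/2}\langle M\bar d,\bar d\rangle^{1/2}=G_J^{1/2}\,\langle M\bar d,\bar d\rangle^{1/2},
\]
hence $2\big|\langle M\xi,\bar d\rangle\big|\,|f_J|\le\tfrac12 G_J+2\,\langle M\bar d,\bar d\rangle\,|f_J|^2$. Since $\|M\|\le1$ we have $\langle M\bar d,\bar d\rangle\le|d|^2\le C\|\Upsilon\|_{C^1}^2$, so the last term --- which involves no derivatives of $f$ --- contributes, after multiplying by $e^{-\varphi}$, integrating over $\Omega$, and summing over $J$, an error bounded by a constant multiple of $\|\Upsilon\|_{C^1}^2\|f\|_\varphi^2$; since $\Upsilon$ is uniformly bounded in $C^1$ this is $\le C'\|\Upsilon\|_{C^1}\|f\|_\varphi^2$, as claimed. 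Subtracting the cross term from $\sum_J\int_\Omega G_J\,e^{-\varphi}\,dV$ gives the first inequality. For the second, run the same argument with $\Upsilon$ in place of $M$, $L^\varphi_\ell f_J$ in place of $\partial f_J/\partial\z_\ell$, and $e=(e_1,\dots,e_n)$ with $e_j=\sum_k\frac{\partial}{\partial\z_k}\Upsilon^{\bar kj}$ in place of $d$; again the error term carries no derivatives of $f$ because $L^\varphi_\ell f_J$ is paired against $\overline{f_J}$.

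The only genuine obstacle is the bookkeeping in the first step --- pinning down the conventions ($\Upsilon^{\bar kj}=\overline{\Upsilon^{\bar jk}}$, the conjugation in the inner product, and the transpose relating $(I_{k\ell}-\Upsilon^{\bar\ell k})$ to $M$) carefully enough to be sure the \emph{same} Hermitian matrix $M$ appears in both slots of the generalized Cauchy--Schwarz inequality. Once that is settled, everything else is elementary linear algebra: the positivity of $I-\Upsilon$ (and of $\Upsilon$), the bound on $d$ and $e$ by $\|\Upsilon\|_{C^1}$, and the AM--GM splitting. In particular, no regularity of a square root of $M$ (or $\Upsilon$) is needed, since we never differentiate it --- the generalized Cauchy--Schwarz inequality sidesteps square roots entirely.
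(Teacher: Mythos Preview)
Your argument is correct and is precisely the ``linear algebra'' the paper alludes to (no proof is given in the paper itself; the lemma is quoted from \cite{HaRa15}). The index bookkeeping checks out: writing $M^{ab}=I_{ab}-\Upsilon^{\bar b a}$, the main-term integrand is $\langle M\xi,\xi\rangle$ and the cross-term integrand is $\overline{f_J}\langle M\xi,\bar d\rangle$, so the generalized Cauchy--Schwarz inequality for the positive semidefinite form $M$ followed by Peter--Paul gives exactly the claimed inequality; the second display is identical with $\Upsilon$, $\eta_j=L^\varphi_j f_J$, and $e_j=\sum_k\partial_{\bar z_k}\Upsilon^{\bar kj}$ in place of $M$, $\xi$, $d$. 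The only cosmetic point is that your error term naturally comes out as $C\|\Upsilon\|_{C^1}^2\|f\|_\varphi^2$ rather than $C\|\Upsilon\|_{C^1}\|f\|_\varphi^2$, but your observation that $\|\Upsilon\|_{C^1}$ is uniformly bounded (so one factor can be absorbed into the constant) is exactly how this is handled.
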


We are now ready to prove the basic estimate.
\begin{prop}\label{prop:basic estimate}
Let $\Omega$ have a connected boundary, a uniformly $C^m$ defining function for some $m\geq 2$, and satisfy weak $Z(q)$ for some $1\leq q\leq n-1$.  Suppose $\varphi$ satisfies $\ddbar\varphi=t\ddbar|z|^2$. Then for any
constant $C$, there exists a $t\in\R$  so that if $f\in L^2_{0,q}(\Omega,e^{-\varphi})\cap\Dom(\dbar)\cap\Dom(\dbars_\varphi)$, then
\[
\| \dbar f \|_\varphi^2 + \| \dbars_\varphi f\|_\varphi^2 \geq C \| f \|_\varphi^2.
\]
\end{prop}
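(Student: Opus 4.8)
The plan is to read the estimate straight off the basic identity of Proposition~\ref{prop:basic identity}, using the two positivity facts already prepared — the boundary estimate \eqref{eqn:boundary term control} and the gradient-absorption Lemma~\ref{lem:G_j bounds E_j} — so that after discarding everything nonnegative the only surviving term on the right is the zeroth-order term $t\sum_J\big((q-\Tr(\Upsilon))f_J,f_J\big)_\varphi$, whose coefficient has a definite sign and is bounded away from zero by (iii) of Lemma~\ref{lem:extension of Upsilon}. Before doing so I would reduce to the setting where Proposition~\ref{prop:basic identity} literally applies: given $f\in L^2_{0,q}(\Omega,e^{-\varphi})\cap\Dom(\dbar)\cap\Dom(\dbars_\varphi)$, apply Lemma~\ref{lem:bounded_density} to get bounded $C^m$ domains $\Omega_j$ and forms $f_j\in C^{m-1}(\Omega)$ with $f_j|_{\Omega_j}\in\Dom(\dbars_\varphi)$ and the stated norm convergence. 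Since $f_j$ vanishes off $B(0,j+2)$ while $\Omega_j$ coincides with $\Omega$ there, $f_j$ is supported away from every boundary point of $\Omega_j$ at which $\bd\Omega_j$ differs from a weak $Z(q)$ boundary; hence Proposition~\ref{prop:basic identity}, \eqref{eqn:boundary term control}, and Lemma~\ref{lem:G_j bounds E_j} all apply on $\Omega_j$ with $\rho=\tilde\delta$ and with $\Upsilon$ the extension of Lemma~\ref{lem:extension of Upsilon} (a preliminary mollification of $\Upsilon$ covers the borderline regularity case $m=2$). It then suffices to prove the estimate on each $\Omega_j$ with a single value of $t$ and a constant independent of $j$, and let $j\to\infty$; so I will argue as if $\Omega$ were bounded and $f\in C^1(\overline\Omega)\cap\Dom(\dbars_\varphi)$.

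Starting from Proposition~\ref{prop:basic identity} I would treat the right-hand side term by term. Its two boundary integrals combine via \eqref{eqn:boundary term control} (which uses (ii) of Lemma~\ref{lem:extension of Upsilon}) into a nonnegative quantity, so they are dropped. Since the eigenvalues of $\Upsilon$ lie in $[0,1]$, both $\Upsilon$ and $I-\Upsilon$ are positive semidefinite, so the two Hermitian forms
\[
\sum_{J\in\I_q}\sum_{j,k=1}^n\big((I_{jk}-\Upsilon^{\bar kj})\tfrac{\partial f_J}{\partial\z_k},\tfrac{\partial f_J}{\partial\z_j}\big)_\varphi
\quad\text{and}\quad
\sum_{J\in\I_q}\sum_{j,k=1}^n\big(\Upsilon^{\bar kj}L^\varphi_jf_J,L^\varphi_kf_J\big)_\varphi
\]
are both $\geq 0$. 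By the two inequalities of Lemma~\ref{lem:G_j bounds E_j}, the sum of these two forms together with the entire cross term $2\Rre\{\cdots\}$ in Proposition~\ref{prop:basic identity} is $\geq\tfrac12$ times the sum of the two forms, minus $O(\|f\|_\varphi^2)$, and hence is $\geq-O(\|f\|_\varphi^2)$. The point to make explicit is that every error appearing here and in Proposition~\ref{prop:basic identity} is of the form $M\|f\|_\varphi^2$ with $M$ controlled by $\|\Upsilon\|_{C^1}$ and $\|\Upsilon\|_{C^2}$ alone; by Lemma~\ref{lem:extension of Upsilon} this $M$ is finite and, crucially, independent of $t$ (and uniform in $j$ under the approximation).

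What remains of Proposition~\ref{prop:basic identity} is therefore
\[
\|\dbar f\|_\varphi^2+\|\dbars_\varphi f\|_\varphi^2\ \geq\ t\int_\Omega\big(q-\Tr(\Upsilon)\big)|f|^2e^{-\varphi}\,dV\ -\ M\|f\|_\varphi^2 .
\]
By (iii) of Lemma~\ref{lem:extension of Upsilon} there is $\kappa>0$ with $|q-\Tr(\Upsilon)|\geq\kappa$ on $\overline\Omega$, and (iii) of Definition~\ref{defn:weak Z(q)} fixes the sign of $q-\Tr(\Upsilon)$ throughout $\overline\Omega$. If $q-\Tr(\Upsilon)>0$ then $q-\Tr(\Upsilon)\geq\kappa$, so for $t>0$ the integral term is $\geq t\kappa\|f\|_\varphi^2$; if $q-\Tr(\Upsilon)<0$ then $\Tr(\Upsilon)-q\geq\kappa$, so for $t<0$ it equals $|t|\int_\Omega(\Tr(\Upsilon)-q)|f|^2e^{-\varphi}\,dV\geq|t|\kappa\|f\|_\varphi^2$. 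Either way $\|\dbar f\|_\varphi^2+\|\dbars_\varphi f\|_\varphi^2\geq(|t|\kappa-M)\|f\|_\varphi^2$, so choosing $|t|\geq T:=(C+M)/\kappa$, with the sign of $t$ dictated by that of $q-\Tr(\Upsilon)$, gives the claimed estimate; passing back through Lemma~\ref{lem:bounded_density} completes the proof.

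I do not expect a genuine conceptual obstacle here: at this stage the proposition is essentially the assembly of Proposition~\ref{prop:basic identity}, \eqref{eqn:boundary term control}, Lemma~\ref{lem:G_j bounds E_j}, and property (iii). The one thing requiring real care — and the step I would be most careful about — is the bookkeeping: confirming that the constants delivered by Proposition~\ref{prop:basic identity} and Lemma~\ref{lem:G_j bounds E_j} are truly independent of the weight parameter $t$ and uniform over the exhausting domains $\Omega_j$. This is exactly what the uniform $C^{m-1}$ bounds on $\Upsilon$ in Definition~\ref{defn:weak Z(q)}, preserved by Lemma~\ref{lem:extension of Upsilon}, and the uniform boundary geometry encoded in Definition~\ref{defn:uniform_defining_function}, are there to guarantee.
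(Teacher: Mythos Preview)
Your proposal is correct and follows essentially the same route as the paper's own proof: reduce via Lemma~\ref{lem:bounded_density} to bounded approximating domains, apply Proposition~\ref{prop:basic identity} together with \eqref{eqn:boundary term control} and Lemma~\ref{lem:G_j bounds E_j} to discard all but the $t(q-\Tr\Upsilon)$ term and a $t$-independent $O(\|f\|_\varphi^2)$ error, then choose $|t|$ large with sign matching that of $q-\Tr\Upsilon$ and pass to the limit. Your write-up is in fact more explicit than the paper's about the uniformity of constants in $j$ and $t$ and about the regularity issue at $m=2$; the paper simply records the resulting inequality and takes $|t|$ large.
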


\begin{proof} Observe that $q-\Tr(\Upsilon)$ is continuous, so there exists $\theta>0$ such that $q-\Tr(\Upsilon)\geq \theta>0$ or
$q-\Tr(\Upsilon) \leq -\theta <0$ for all $z\in\bar\Omega$. In the former case, we will take $t>0$ and in the latter case, we choose $t<0$.  Let $f_j$ and $\Omega_j$ be as in Lemma \ref{lem:bounded_density}.  Using
Proposition \ref{prop:basic identity}, Lemma \ref{lem:G_j bounds E_j},  and \eqref{eqn:boundary term control} it then follows that
\[
\| \dbar f_j\|_\varphi^2 + \| \dbars_\varphi f_j \|_\varphi^2
\geq  |t|\theta \sum_{J\in\I_q} \big((f_j)_J, (f_j)_J\big)_\varphi + O (\| f_j \|_\varphi^2),
\]
where $O(\|f_j\|_\varphi^2) \leq c(\|\Upsilon\|_{C^1}+ \| \Upsilon \|_{C^2}^2) \| f_j \|_\varphi^2$.
The result follows immediately by taking $|t|$ large enough and letting $j\rightarrow\infty$.
\end{proof}

Theorem \ref{thm:closed range for dbar} now follows from Proposition \ref{prop:basic estimate} by standard arguments. For example,
see \cite{HaRa11,Str10}.

\section{Closed Range in Unweighted Spaces}
\label{sec:closed_range_in_unweighted_spaces}

Although our primary goal in this paper is to prove estimates in weighted spaces, we briefly digress to consider closed range in unweighted $L^2$ spaces.  Note that we have stated Proposition \ref{prop:basic estimate} in sufficient generality to include unbounded domains with bounded weight functions (Herbig and McNeal \cite{HeMc16} have shown that a weight function with self-bounded gradient suffices for closed range).  Our model will be the strictly pseudoconvex domain $\Omega$ defined by the defining function $\rho(z)=\sum_{j=1}^n (\re z_j)^2-1$.  In this case, we can use the weight $\varphi=2t\sum_{j=1}^n (\re z_j)^2$.  We can easily check that this satisfies $\ddbar\varphi=t\ddbar|z|^2$.  Furthermore, $|\varphi|\leq 2t$ on $\Omega$, so we can use the arguments of H\"ormander \cite{Hor65} to obtain closed range in unweighted $L^2$ spaces (see also section 4.4 of \cite{ChSh01} for details on passing to unweighted estimates for $N_q$).

With this as our model, we can prove the following:
\begin{prop}Let $\Omega\subset \C^n$ be a domain with connected boundary and a uniformly $C^3$ defining function that
satisfies weak $Z(q)$ for some $1\leq q \leq n-1$.  Suppose there exists a unitary matrix $U_{jk}$ such that $\sum_{j=1}^n\left(\re\left(\sum_{k=1}^n U_{jk} z_k\right)\right)^2$ is bounded on $\Omega$.  Then
\begin{enumerate}\renewcommand{\labelenumi}{(\roman{enumi})}
\item The operator $\dbar: L^2_{0,\tilde q}(\Omega)\to L^2_{0,\tilde q+1}(\Omega)$ has closed range
for $\tilde q = q-1$ or $q$;
\item The operator $\dbars: L^2_{0,\tilde q+1}(\Omega)\to L^2_{0,\tilde q}(\Omega)$ has closed range
for $\tilde q = q-1$ or $q$;
\item The $\dbar$-Neumann Laplacian defined by $\Box_{q} = \dbar\dbars + \dbars\dbar$ has closed range on $L^2_{0,q}(\Omega)$;
\item The weighted $\dbar$-Neumann operator operator $N_{q}$  is continuous on $L^2_{0,q}(\Omega)$;
\item The canonical solution operators
for $\dbar$, $\dbars N_{q}:L^2_{0,q}(\Omega)\to L^2_{0,q-1}(\Omega)$
and\\ $N_{q}\dbars : L^2_{0,q+1}(\Omega)\to L^2_{0,q}(\Omega)$,
are continuous;
\item The canonical solution operators for $\dbars$, $\dbar N_{q}:L^2_{0,q}(\Omega)\to L^2_{0,q+1}(\Omega)$
and\\ $N_{q}\dbar : L^2_{0,q-1}(\Omega)\to L^2_{0,q}(\Omega)$, are continuous;
\item If $\tilde{q}=q$ or $q+1$ and $\alpha\in L^2_{0,\tilde{q}}(\Omega)$
so that  $\dbar \alpha =0$, then there exists $u\in L^2_{0,\tilde{q}-1}(\Omega)$ so that
\[
\dbar u = \alpha.
\]
\item $\ker(\Box_{q}) = \{0\}$. 
\end{enumerate}
\end{prop}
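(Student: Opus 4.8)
The plan is to deduce this unweighted statement from Theorem~\ref{thm:closed range for dbar} by exhibiting a bounded weight function $\varphi$ satisfying $\ddbar\varphi = t\ddbar|z|^2$, so that the weighted and unweighted $L^2$ norms are mutually comparable with constants independent of which form we consider. First I would apply the unitary change of coordinates $w = Uz$: since $U$ is unitary, this is an isometry of $\C^n$ that preserves $|z|^2$, the standard hermitian inner product on forms, Lebesgue measure, and the $\dbar$-complex, and it transforms $\Omega$ into a biholomorphically equivalent domain that again has a uniformly $C^3$ defining function and again satisfies weak $Z(q)$ (all the defining data in Definition~\ref{defn:weak Z(q)} transform covariantly). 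So without loss of generality I may assume from the outset that $\sum_{j=1}^n (\re z_j)^2$ is bounded on $\Omega$, say by a constant $M$.

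Next, set $\varphi = 2t\sum_{j=1}^n (\re z_j)^2$ for $t>0$ (or $t<0$ in the case $q-\Tr(\Upsilon)<0$), exactly as in the model computation preceding the statement. One checks directly that $\re z_j = \tfrac12(z_j + \z_j)$ gives $\p\dbar\big((\re z_j)^2\big) = \tfrac14\, dz_j\wedge d\z_j$, hence $\ddbar\varphi = t\,\ddbar|z|^2$, so $\varphi$ is an admissible weight for Proposition~\ref{prop:basic estimate} and Theorem~\ref{thm:closed range for dbar}. Moreover $|\varphi|\le 2|t|M$ on $\Omega$, so $e^{-2|t|M}\|f\|^2 \le \|f\|_\varphi^2 \le e^{2|t|M}\|f\|^2$ for every $L^2$ form $f$, in every degree. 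Since Theorem~\ref{thm:closed range for dbar} applies to this $\Omega$ and this $\varphi$ for $|t|$ sufficiently large, we have the weighted conclusions (i)--(viii) of that theorem; the two-sided bound between $\|\cdot\|_\varphi$ and $\|\cdot\|$ then transfers each of them verbatim to the unweighted setting. In particular the closed-range estimate $\|f\|_\varphi^2 \le C(\|\dbar f\|_\varphi^2 + \|\dbars_\varphi f\|_\varphi^2)$ on $\Dom(\dbar)\cap\Dom(\dbars_\varphi)$ becomes $\|f\|^2 \le C'(\|\dbar f\|^2 + \|\dbars f\|^2)$ once one notes that the equivalence of norms forces $\Dom(\dbars_\varphi) = \Dom(\dbars)$ and makes $\dbars_\varphi$ and $\dbars$ agree up to bounded lower-order terms; alternatively, one invokes the standard fact (e.g.\ Section~4.4 of \cite{ChSh01} or H\"ormander \cite{Hor65}) that closed range of $\dbar$ and continuity of $N_q$ pass between comparable weights. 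Statements (i)--(viii) then follow by the same standard functional-analytic arguments cited after Proposition~\ref{prop:basic estimate} (see \cite{HaRa11,Str10}).

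The only genuinely delicate point is the verification that weak $Z(q)$, and the \emph{uniform} $C^3$ hypothesis on the defining function, are preserved under the unitary change of variables and that $\varphi$ is genuinely bounded on the new domain --- but this is immediate, since a unitary map is a Euclidean isometry and hence carries uniformly $C^m$ defining functions to uniformly $C^m$ defining functions with the same constants, carries the Levi form and the matrix $\Upsilon$ to their conjugates by $U$ (preserving eigenvalues, the trace, and the boundary-tangency condition $\sum_j \Upsilon^{\bar k j}\rho_j = 0$), and the boundedness of $\sum_j(\re z_j)^2$ on $\Omega$ after applying $U^{-1}$ is exactly the hypothesis $\sum_j(\re(\sum_k U_{jk}z_k))^2$ bounded on the original $\Omega$. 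So there is essentially no obstacle beyond bookkeeping; the substance of the result is entirely contained in Theorem~\ref{thm:closed range for dbar} together with the elementary observation that a multiple of a sum of squares of \emph{real} linear functionals is pluriharmonic and hence generates a multiple of the Euclidean K\"ahler form.
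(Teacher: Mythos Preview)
Your approach is exactly the one the paper sketches in the paragraph preceding the proposition: set $\varphi=2t\sum_j(\re z_j)^2$ (after the unitary change of variables), observe $\ddbar\varphi=t\,\ddbar|z|^2$ so that Proposition~\ref{prop:basic estimate} applies, use boundedness of $\varphi$ on $\Omega$ to make the weighted and unweighted norms comparable, and then invoke H\"ormander~\cite{Hor65} and Section~4.4 of \cite{ChSh01} to transfer closed range and the $N_q$ estimates to the unweighted space.

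Two small slips worth correcting: first, $\partial\bar\partial\big((\re z_j)^2\big)=\tfrac12\,dz_j\wedge d\bar z_j$, not $\tfrac14$ (your stated conclusion $\ddbar\varphi=t\,\ddbar|z|^2$ is nonetheless correct with the factor $2t$); second, your closing sentence misstates the mechanism---a sum of squares of real linear functionals is \emph{not} pluriharmonic, rather it differs from $\tfrac12|z|^2$ by the pluriharmonic function $\tfrac14\sum_j(z_j^2+\bar z_j^2)$, which is precisely why $\ddbar\varphi$ equals a multiple of the Euclidean K\"ahler form.
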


It might seem more desirable to restrict to domains on which we have estimates in unweighted $L^2$, as we have done in this section.  However, in a later paper we intend to prove estimates for the solution operator in weighted Sobolev spaces.  If we try to pass to Sobolev space estimates at this point, we will find that we have reached a dead end.  Suppose that $\Omega$ contains infinitely many disjoint balls $B_k$ of fixed radius $r$ (as is the case in our model domain defined by $\rho(z)=\sum_{j=1}^n (\re z_j)^2-1$, where we may consider the family of balls centered at $(0,\ldots,0,2ki)$ of radius $1$).  If we take any function $f\in C^\infty_0(B(0,r))$ and define $f_k(z)=f(z-c_k)$, where $c_k$ is the center of $B_k$, then we have a sequence $\{f_k\}$ that is uniformly bounded in $L^2$ with no convergent subsequence.  Hence, the Rellich Lemma is impossible, making any theory of Sobolev Spaces extremely problematic.

%
%
\section{Examples}\label{sec:examples}

We begin by correcting a proof from \cite{HaRa13} in order to show that we have a large class of domains with uniformly $C^m$ defining functions.

\begin{prop}
\label{prop:Cm_in_RPn}
  If $\tilde\Omega\subset\mathbb{RP}^n$ is a $C^m$ domain for some $m\geq 2$ and
  \[
    \Omega=\{x\in\mathbb{R}^n:[x_1:\ldots:x_n:1]\in\tilde\Omega\},
  \]
  then $\Omega$ is uniformly $C^m$.
\end{prop}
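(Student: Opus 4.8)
The plan is to transfer the $C^m$-regularity of $\tilde\Omega\subset\mathbb{RP}^n$ to $\Omega\subset\mathbb{R}^n$ via the standard affine chart, and then verify that the three conditions in Definition \ref{defn:uniform_defining_function} (a neighborhood $U$ with $\dist(\bd\Omega,\bd U)>0$, $\|\rho\|_{C^m(U)}<\infty$, and $\inf_U|\nabla\rho|>0$) hold by exploiting compactness of $\mathbb{RP}^n$. The key point is that $\mathbb{RP}^n$ is a compact manifold, so $\bd\tilde\Omega$ is a compact $C^m$ hypersurface; hence there is a single $C^m$ defining function $\tilde\rho$ for $\tilde\Omega$ on an open neighborhood $\tilde U$ of $\bd\tilde\Omega$ with $\tilde\rho$ bounded in $C^m(\tilde U)$ and $\inf_{\tilde U}|\nabla\tilde\rho|>0$ (both finite/positive by continuity on the compact set $\bd\tilde\Omega$, after shrinking $\tilde U$). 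The only subtlety is that the affine chart $x\mapsto[x_1:\cdots:x_n:1]$ is a diffeomorphism onto the open set $\{x_{n+1}\neq 0\}$ of $\mathbb{RP}^n$ but is \emph{not} proper: points of $\Omega$ running off to infinity in $\mathbb{R}^n$ correspond to points of $\tilde\Omega$ approaching the hyperplane at infinity $H_\infty=\{x_{n+1}=0\}$.

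First I would fix a $C^m$ defining function $\tilde\rho$ for $\tilde\Omega$ on a neighborhood $\tilde U$ of $\bd\tilde\Omega$ in $\mathbb{RP}^n$, normalized so that $|\nabla_g\tilde\rho|=1$ on $\bd\tilde\Omega$ with respect to some fixed smooth Riemannian metric $g$ on $\mathbb{RP}^n$; by compactness of $\bd\tilde\Omega$ and continuity we may shrink $\tilde U$ so that $\|\tilde\rho\|_{C^m(\tilde U)}<\infty$, $\inf_{\tilde U}|\nabla_g\tilde\rho|>0$, and $\dist_g(\bd\tilde\Omega,\bd\tilde U)>0$. Second, I would treat the two cases $\bd\tilde\Omega\cap H_\infty=\emptyset$ and $\bd\tilde\Omega\cap H_\infty\neq\emptyset$ separately. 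In the first case $\bd\tilde\Omega$ is a compact subset of the affine chart, so $\bd\Omega$ is a \emph{compact} $C^m$ hypersurface in $\mathbb{R}^n$ and the statement is the trivial case already noted after Definition \ref{defn:uniform_defining_function}. In the second case, I would further shrink $\tilde U$ so that $\overline{\tilde U}$ does not meet $\bd\Omega$'s irrelevant far region—more precisely, after shrinking, $\tilde U\subset\{x_{n+1}\neq 0\}\cup(\text{small nbhd of }\bd\tilde\Omega\cap H_\infty)$—and pull back: set $U=\{x\in\mathbb{R}^n:[x_1:\cdots:x_n:1]\in\tilde U\}$ and $\rho(x)=\tilde\rho([x_1:\cdots:x_n:1])$.

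The main work, and the step I expect to be the genuine obstacle, is verifying $\|\rho\|_{C^m(U)}<\infty$ and $\inf_U|\nabla\rho|>0$ for the Euclidean structure on $\mathbb{R}^n$, since the chart map has unbounded derivatives of all orders as $x\to\infty$. The resolution is to write the chart transition explicitly: on $\{x_{n+1}\neq 0\}$, in the affine coordinates $y_j=x_j/x_{n+1}$ (for $j\le n$) centered at a point of $H_\infty$, the map $x\mapsto y$ and its inverse, together with the Jacobian comparing the Euclidean metric on $\mathbb{R}^n$ to the metric $g$ near $H_\infty$, all behave like rational functions with denominators bounded away from zero on $U$ (because $U$ stays in a fixed neighborhood of $\bd\tilde\Omega$, which lies a positive $g$-distance from the ``bad'' locus). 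I would quantify this: there is a constant $c>0$ so that on $U$, all first $m$ derivatives of the coordinate change are bounded by $c(1+|x|)^{Cm}$, while $|x_{n+1}|^{-1}=(\text{polynomial in } y)$ stays comparable to $1+|x|$; combining these with $\|\tilde\rho\|_{C^m(\tilde U)}<\infty$ and the chain rule gives $|\nabla\rho(x)|\gtrsim(1+|x|)^{-1}\inf_{\tilde U}|\nabla_g\tilde\rho|$ and $|\partial^\alpha\rho(x)|\lesssim(1+|x|)^{|\alpha|}$ for $|\alpha|\le m$, which is \emph{not yet} what we want. The fix—and this is the crux—is to replace $\rho$ by a rescaled defining function: set $\hat\rho(x)=\rho(x)\cdot x_{n+1}^{2\lceil m/2\rceil}$ (a nowhere-vanishing smooth positive factor on $U$ since $x_{n+1}\neq 0$ there), or more cleanly multiply by $(1+|x|^2)^{-N}$ written in projective-homogeneous form, chosen so that the product is a ratio of homogeneous polynomials whose restriction to $\mathbb{R}^n$ has bounded $C^m$ norm and non-vanishing gradient on a tubular neighborhood of $\bd\Omega$; the homogeneity degrees are matched precisely so the $(1+|x|)$ powers cancel. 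I would then check the three conditions of Definition \ref{defn:uniform_defining_function} for $\hat\rho$ on $U$, using that $\dist_g(\bd\tilde\Omega,\bd\tilde U)>0$ translates (again via the rational, locally bi-Lipschitz chart) into $\dist(\bd\Omega,\bd U)>0$, completing the proof.
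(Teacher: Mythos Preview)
Your overall plan is reasonable, but the execution breaks down at the rescaling step, and the derivative estimates preceding it point in the wrong direction.

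First, the derivatives of the chart map $x\mapsto[x_1:\cdots:x_n:1]$ do not grow as $|x|\to\infty$; they decay. If one lifts $\tilde\rho$ to a function on $\mathbb{R}^{n+1}\setminus\{0\}$ with $\tilde\rho(\lambda y)=\tilde\rho(y)$, then homogeneity gives $\nabla^k\tilde\rho(y)=|y|^{-k}\nabla^k\tilde\rho(y/|y|)$, so the pullback $\rho(x)=\tilde\rho(x_1,\ldots,x_n,1)$ satisfies $|\nabla^k\rho(x)|\lesssim(1+|x|^2)^{-k/2}$, not $(1+|x|)^{|\alpha|}$. Thus $\|\rho\|_{C^m}<\infty$ is automatic; the genuine obstacle is the \emph{lower} bound on $|\nabla\rho|$, which in general is only $|\nabla\rho(x)|\gtrsim(1+|x|^2)^{-1}$ (not $(1+|x|)^{-1}$ as you claim), since the full $\mathbb{R}^{n+1}$-gradient of $\tilde\rho$ may point mostly in the $y_{n+1}$-direction. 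So $\inf_U|\nabla\rho|=0$ and $\rho$ itself fails Definition~\ref{defn:uniform_defining_function}.

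Second, your proposed repair does not work as written. Multiplying by $x_{n+1}^{2\lceil m/2\rceil}$ is vacuous since $x_{n+1}\equiv 1$ on the affine chart, and multiplying by $(1+|x|^2)^{-N}$ goes the wrong way: on $\partial\Omega$ one has $\nabla\hat\rho=(1+|x|^2)^{-N}\nabla\rho$, which decays even faster. A correct normalization would multiply by a \emph{positive} power of $1+|x|^2$, but then one must re-verify the $C^m$ bound for the product, and that is precisely the work your sketch omits.

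The paper avoids rescaling entirely. Rather than verifying Definition~\ref{defn:uniform_defining_function} for some modified $\hat\rho$, it shows that the ratios $|\nabla^k\rho|/|\nabla\rho|$ are uniformly bounded on $\partial\Omega$ for $2\le k\le m$. The upper bound on $|\nabla^k\rho|$ comes from homogeneity as above; the lower bound on $|\nabla\rho|$ comes from the Euler relation $y\cdot\nabla\tilde\rho(y)=0$, which yields $x\cdot\nabla\rho(x)=-\tilde\rho_{n+1}(x,1)$ and hence $|\nabla\tilde\rho(x,1)|^2\le(|x|^2+1)|\nabla\rho(x)|^2$. Combining these gives $|\nabla^k\rho|/|\nabla\rho|\lesssim(1+|x|^2)^{1-k/2}$, bounded for $k\ge 2$. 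One then invokes Theorem~1.3 of \cite{HaRa13} to conclude that $\Omega$ admits a uniformly $C^m$ defining function. This also bypasses the $\dist(\partial\Omega,\partial U)>0$ verification, which you assert follows from a ``locally bi-Lipschitz chart'' but do not actually justify.
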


\begin{rem}
  This corrects the proof of Corollary 3.1 in \cite{HaRa13}.  The remark immediately following the proof of Corollary 3.1 does not follow from the corrected proof, and is probably not true without further assumptions (assuming that $\Omega$ and $\Omega^c$ are both asymptotically non-radial would suffice).
\end{rem}

\begin{proof}
  Let $\tilde\rho$ be a $C^m$ defining function for $\tilde\Omega$.  In homogeneous coordinates, $\tilde\rho:\mathbb{R}^{n+1}\backslash\{0\}\rightarrow\mathbb{R}$ satisfies $\tilde\rho(y)=\tilde\rho(\lambda y)$ for every $\lambda\in\mathbb{R}\backslash\{0\}$.  Then
  \[
    0=\frac{\partial}{\partial\lambda}\tilde\rho(\lambda y)|_{\lambda=1}=y\cdot\nabla\tilde\rho(y).
  \]
  We can construct a defining function $\rho$ for $\Omega$ by $\rho(x)=\tilde\rho([x_1:\ldots:x_n:1])$.  Thus,
  \[
    0=[x_1:\ldots:x_n:1]\cdot\nabla\tilde\rho([x_1:\ldots:x_n:1])=x\cdot\nabla\rho(x)+\frac{\partial\tilde\rho}{\partial y_{n+1}}([x_1:\ldots:x_n:1]),
  \]
  so we can write
  \begin{equation}
  \label{eq:x_dot_gradient_rho}
    x\cdot\nabla\rho(x)=-\tilde\rho_{n+1}([x_1:\ldots:x_n:1]).
  \end{equation}

  Since $\tilde\rho(y)=\tilde\rho(\lambda y)$, we have $\nabla^k\tilde\rho(y)=\lambda^k\nabla^k\tilde\rho(\lambda y)$ where $\nabla^k$ represents the vector of all $k$th order derivatives.  Hence $\nabla^k\tilde\rho(y)=\frac{\nabla^k\tilde\rho(y/|y|)}{|y|^k}$.  If $|\nabla^k\tilde\rho(y/|y|)|\leq C_k$ on $\partial\tilde\Omega$, we have $|\nabla^k\rho(x)|\leq\frac{C_k}{(|x|^2+1)^{k/2}}$.  On the other hand, \eqref{eq:x_dot_gradient_rho} implies
  \[
    |\tilde\rho_{n+1}([x_1:\ldots:x_n:1])|^2\leq|x|^2|\nabla\rho(x)|^2,
  \]
  so adding $|\nabla\rho(x)|^2$ to both sides gives us
  \[
    |\nabla\tilde\rho([x_1:\ldots:x_n:1])|^2\leq(|x|^2+1)|\nabla\rho(x)|^2.
  \]
  If $|\nabla\tilde\rho(y/|y|)|\geq C_0>0$ on $\partial\tilde\Omega$, then we have
  \[
    (|x|^2+1)|\nabla\rho(x)|^2\geq\frac{C_0^2}{|x|^2+1}.
  \]
  Hence, $|\nabla\rho(x)|\geq\frac{C_0}{|x|^2+1}$, so
  \[
    \frac{|\nabla^k\rho(x)|}{|\nabla\rho(x)|}\leq\frac{C_k}{C_0(|x|^2+1)^{k/2-1}}.
  \]
  The right-hand side is uniformly bounded provided that $k\geq 2$ (and this estimate is trivial when $k=1$).  By Theorem 1.3 in \cite{HaRa13}, $\Omega$ has a uniformly $C^m$ defining function.

\end{proof}

With this result, we can construct a simple example.  Fix $1\leq p\leq n-1$ and set
\[
  |z|^2_+=\sum_{j=1}^p|z_j|^2\text{ and }|z|^2_-=\sum_{j=p+1}^n |z_j|^2.
\]
Using Proposition \ref{prop:Cm_in_RPn}, the domain in $\mathbb{C}^n$ defined by $\rho(z)=|z|^2_+-|z|^2_-+1$ admits a uniformly $C^\infty$ defining function, since this is defined by a nondegenerate quadratic polynomial.  This domain is also radially non-asymptotic, but we omit the proof since this is not relevant for closed range.  This domain satisfies $Z(q)$ for any $q\neq n-p-1$.  When $q>n-p-1$, this is an example of a $Z(q)$ domain with at least $q+1$ negative eigenvalues at every point, which is impossible on bounded domains.  To confirm these claims, we define $\Upsilon^{\bar k j}=\delta_{jk}-\bar{z}_k z_j\left(|z|^2_-\right)^{-1}$ for $p+1\leq j,k\leq n$ and $\Upsilon^{\bar k j}=0$ otherwise.  Observe that $\rho(z)=0$ implies $|z|^2_-=|z|^2_++1\neq 0$, so $\Upsilon$ is well-defined whenever $\rho(z)=0$.  Then $\sum_{j=1}^n\Upsilon^{\bar k j}\rho_j=0$ and all eigenvalues of $\Upsilon$ are equal to either $0$ or $1$.  In fact, $\sum_{\ell=1}^n\Upsilon^{\bar k \ell}\rho_{\ell\bar j}=-\Upsilon^{\bar k j}$, so $\Upsilon$ defines a projection onto an eigenspace of the Levi-form with eigenvalues of $-1$.  Since the rank of $\Upsilon$ is $n-p-1$, the dimension of this eigenspace must be $n-p-1$.  If, instead, we project onto the vectors in the first $p$ coordinates that are orthogonal to $\partial\rho$, then we have a $p-1$ dimensional eigenspace corresponding to the eigenvector $1$.  To compute the remaining eigenvector, we consider the vector $v=\left(|z|^2_-z_1,\ldots,|z|^2_-z_p,|z|^2_+z_{p+1},\ldots,|z|^2_+z_n\right)$.  Then $\sum_{j=1}^n v^j\rho_j=0$ and
\[
  \sum_{j=1}^n v^j\rho_{j\bar k}=\begin{cases}
    |z|^2_-z_k,&1\leq k\leq p\\
    -|z|^2_+z_k,&p+1\leq k\leq n
  \end{cases}
  =\frac{|z|^2_--|z|^2_+}{|z|^2}v^k+\frac{2|z|^2_-|z|^2_+}{|z|^2}\overline{\rho_k}.
\]
Hence $v$ is also an eigenvector of the Levi-form with eigenvalue $\frac{|z|^2_--|z|^2_+}{|z|^2}$.  When $\rho(z)=0$ this is equal to $(1+2|z|^2_+)^{-1}$.  Thus, the sum of the $q$ smallest eigenvalues is equal to
\[
  \mu_1+\ldots+\mu_q=\begin{cases}
    -q&1\leq q\leq n-p-1\\
    -n+p+1+(1+2|z|^2_+)^{-1}+q-n+p&n-p\leq q\leq n
  \end{cases},
\]
so
\[
  \mu_1+\ldots+\mu_q-\sum_{j,k=1}^n\Upsilon^{\bar k j}\rho_{j\bar k}=\begin{cases}
    -q+n-p-1&1\leq q\leq n-p-1\\
    (1+2|z|^2_+)^{-1}+q-n+p&n-p\leq q\leq n
  \end{cases},
\]
and this is always nonnegative.  We have $\Tr\Upsilon\neq q$ provided that $n-p-1\neq q$, so $\Omega$ satisfies $Z(q)$ for all $q\neq n-p-1$.

For a more interesting example, we introduce the polynomial $P(z_1,z_2)=2x\abs{z_2}^2-x y^4$, where $z_1=x+iy$.  In \cite{HaRa15}, we show that the domain defined by $P(z_1,z_2)<\im z_3$ in $\mathbb{C}^3$ (formally) satisfies weak $Z(2)$ and requires the full flexibility of Definition \ref{defn:weak Z(q)}.  In the context of \cite{HaRa15}, this proof is only formal, because the domain is unbounded but the closed range results of \cite{HaRa15} only hold on bounded domains.  Hence, in that paper we must go through a lengthy argument to transform this into a bounded domain.  In the present paper, we are no longer restricted to bounded domains, but unfortunately this domain is not uniformly $C^2$.  Let $M$ denote the set where $y=0$, $z_2=0$, and $\im z_3=0$, and let $\rho=-\im z_3+P(z_1,z_2)$ be a defining function.  Then $\rho|_M=0$, $\partial\rho|_M=\frac{i}{2}dz_3$, and $\ddbar\rho|_M=2xdz_2\wedge d\bar{z}_2$, so by Lemma 2.3 in \cite{HaRa13} the domain defined by $\rho$ has no uniformly $C^2$ defining function.  Fortunately, this is easily corrected.

We would like to add $\frac{1}{3}(|z_1|^2+|z_2|^2)^3$ to $P(z_1,z_2)$ in order to improve the asymptotic behavior.  Unfortunately, to preserve the interesting properties of the example, we need to preserve the set where $x=z_2=0$.  To that end, we will truncate $\frac{1}{3}(|z_1|^2+|z_2|^2)^3$ to exclude all terms involving $y^4$ or $y^6$.  Unfortunately, this will cause uniform smoothness to fail on our critical set, so we subtract the pluriharmonic polynomial $\frac{1}{60}\re(z_1^6)$ in order to correct this without impacting the Levi-form.  In summary, our correction will take the form
\[
  Q(z_1,z_2)=\frac{1}{3}(x^2+|z_2|^2)^3+(x^2+|z_2|^2)^2 y^2-\frac{1}{60}x^6+\frac{1}{4}x^4y^2-\frac{1}{4}x^2y^4+\frac{1}{60}y^6,
\]
so our new example will be defined by:
\begin{equation}
\label{eq:interesting_example}
  \rho(z)=-\im z_3+P(z_1,z_2)+Q(z_1,z_2),
\end{equation}
For such a polynomial, we can use the argument of \cite{HaRa15} to show the following result.
\begin{prop}
\label{prop:interesting_example}
  There exists a domain in $\mathbb{C}^3$ with a uniformly $C^\infty$ defining function satisfying weak $Z(2)$ such that for some closed but unbounded set $K\subset\partial\Omega$, $\Upsilon$ in Definition \ref{defn:weak Z(q)} must have one eigenvalue equal to one on $K$ but on any neighborhood of $K$ there must be points where $\Upsilon$ has no eigenvalues equal to one.
\end{prop}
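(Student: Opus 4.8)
The plan is to build the domain very explicitly from the polynomial $\rho$ in \eqref{eq:interesting_example} and then verify the three claims---uniform $C^\infty$ smoothness, weak $Z(2)$, and the rigidity of $\Upsilon$ along $K$---essentially by transporting the computations of \cite{HaRa15} to this new defining function. First I would record the critical set: let $K=\{z\in\partial\Omega: y=0,\ z_2=0\}$ (where $z_1=x+iy$), which is closed and unbounded since it is parametrized by $(x,\re z_3)$ with $\im z_3 = P+Q=0$ on this set. The point of adding $Q$ (and subtracting $\tfrac{1}{60}\re(z_1^6)$) is exactly to make $\rho$ uniformly $C^\infty$ near $K$: I would check that $Q$ cancels the degeneracy in $\nabla\rho$ along $M$ that Lemma 2.3 of \cite{HaRa13} flagged, while being pluriharmonic-modulo-lower-order so that $\ddbar\rho|_K$ still has the shape $2x\,dz_2\wedge d\bar z_2$ plus controlled terms. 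Away from a neighborhood of $K$ the domain agrees (up to the harmless $Q$) with a strictly pseudoconvex-type model, so uniform $C^\infty$ there is routine; the nontrivial bookkeeping is near $K$, and I would invoke Proposition \ref{prop:Cm_in_RPn}-style arguments or a direct estimate on $|\nabla^k\rho|/|\nabla\rho|$ to close this out.

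Next I would verify weak $Z(2)$ by exhibiting $\Upsilon$. Following \cite{HaRa15}, on a neighborhood of $K$ one takes $\Upsilon$ to be (close to) the rank-one projection onto the $z_2$-direction normalized to be tangential, i.e. $\Upsilon^{\bar k j}$ supported in the $j,k=2$ slot and adjusted by the $\rho_j$ so that $\sum_j \Upsilon^{\bar k j}\rho_j=0$; away from $K$, where the Levi-form is already favorable, one patches $\Upsilon$ down to $0$ (or to a projection onto a positive eigenspace, as the signature dictates) using a cutoff. Then (i) is immediate from the construction (eigenvalues $0$ or $1$), (iii) requires checking $\Tr(\Upsilon)\ne 2$ uniformly---here $\Tr(\Upsilon)\le 1$ where $\Upsilon$ is the rank-one piece and is bounded away from $2$ elsewhere---and (ii), the inequality $\mu_1+\mu_2-\sum\Upsilon^{\bar k j}\rho_{j\bar k}\ge 0$, is the substantive computation: one diagonalizes the Levi-form of $\rho$ along and near $K$, notes that subtracting off the $z_2$-direction (where the Levi eigenvalue is the possibly-negative $2x$) leaves the two smallest remaining eigenvalues summing to something nonnegative, and then checks this persists under perturbation off $K$ because of the uniform bounds from step one. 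This is exactly the argument of \cite{HaRa15}, now run on a genuinely unbounded but uniformly-smooth domain, so it goes through without the bounded-domain detour.

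Finally, for the rigidity statement I would argue both halves. That $\Upsilon$ \emph{must} have an eigenvalue equal to $1$ on $K$: along $K$ the Levi-form has a negative eigenvalue in the $z_2$-direction (from the $2x|z_2|^2$ term, on the portion of $K$ where $x>0$) while having few directions available, so condition (ii) of Definition \ref{defn:weak Z(q)} forces $\Upsilon$ to put essentially its full weight on that negative eigendirection---I would make this precise by plugging the Levi eigenvalues into (ii) and observing the inequality fails unless the corresponding $\Upsilon$-eigenvalue is $1$ (using $q=2$ and the structure of $\mu_1,\mu_2$). That on every neighborhood of $K$ there are points where $\Upsilon$ has \emph{no} eigenvalue equal to $1$: as one moves off $K$ the negative Levi eigenvalue becomes positive (or the geometry gains a strictly positive direction), and a short perturbation/continuity argument---combined with condition (iii), $\Tr(\Upsilon)\ne 2$, which bars $\Upsilon$ from simply being a rank-two projection---shows a projection-type $\Upsilon$ is no longer needed and any valid $\Upsilon$ can (indeed must, to satisfy the uniform bounds together with (ii)) have all eigenvalues strictly below $1$ there. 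The main obstacle I anticipate is the weak $Z(2)$ verification, specifically establishing inequality (ii) uniformly on a full neighborhood of the unbounded set $K$ with all the perturbation terms from $Q$ and from the off-diagonal parts of $\Upsilon$ controlled simultaneously; this is where the precise form of $Q$ was engineered, and it is the step I would treat most carefully, importing the relevant lemma from \cite{HaRa15} and checking that its hypotheses now hold globally rather than only on a compact piece.
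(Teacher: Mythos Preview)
Your identification of the set $K$ and the rigidity mechanism are both wrong, and this derails the negative half of the proof. In the paper, the Levi form \emph{vanishes} on the set $K_0=\{x=0,\ z_2=0\}$; it does not have a negative eigenvalue there. The correct $K$ is $\{z_1=z_2=0\}=K_0\cap\{y=0\}$, not your $\{y=0,\ z_2=0\}$. The rigidity argument is a first-order Taylor analysis: since $\mu_1+\mu_2-\sum\Upsilon^{\bar k j}\rho_{j\bar k}$ equals zero on $K_0$ and must be $\geq 0$ nearby, its linear terms in $x$ and in $z_2$ must vanish. Vanishing of the $z_2$-linear term forces $\Upsilon^{\bar 2 1}=0$ on $K_0$; vanishing of the $x$-linear term forces
\[
  \frac{-3y^2}{1+4|\rho_1|^2}(1-\Upsilon^{\bar 1 1})+2(1-\Upsilon^{\bar 2 2})=0\quad\text{on }K_0.
\]
At $y=0$ this gives $\Upsilon^{\bar 2 2}=1$; for $y\neq 0$ it says $\Upsilon^{\bar 1 1}=1$ iff $\Upsilon^{\bar 2 2}=1$, which together with $\Tr\Upsilon\neq 2$ rules out any eigenvalue equal to $1$. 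Your proposed mechanism (a negative Levi eigenvalue forcing a projection) does not apply, and your sign is also reversed: $2x|z_2|^2$ contributes $+2x$ to $\rho_{2\bar 2}$, positive when $x>0$.

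For the weak $Z(2)$ verification, your two-region scheme (rank-one near $K$, zero elsewhere) misses the genuinely new difficulty on this unbounded domain: the degenerate set $K_0$ is itself unbounded in the $y$-direction, and the construction from \cite{HaRa15} only covers a compact piece $\{|y|\leq Y_2\}$. The paper needs a separate $\Upsilon_1$ on $\{\sqrt{x^2+|z_2|^2}\leq R_1,\ |y|\geq Y_1\}$, a rank-one matrix with a carefully tuned scalar $\lambda=1-\frac{150y^2-100}{y^{10}+100y^8}$ chosen so that the $x$-linear terms in $\Tr\mathcal{L}-\sum\Upsilon_1^{\bar k j}\mathcal{L}_{j\bar k}$ cancel exactly. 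Without this third region and this specific $\lambda$, you cannot close inequality (ii) uniformly as $|y|\to\infty$; ``importing the relevant lemma from \cite{HaRa15}'' does not suffice because that lemma was proved on a compact set.
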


\begin{rem}
  As shown in \cite{HaRa15}, this implies that $\Omega$ is not $1$-pseudoconvex.  In the notation of the present paper, $1$-pseudoconvexity would mean that there exists $\Upsilon$ satisfying Definition \ref{defn:weak Z(q)} for weak $Z(2)$, but all eigenvalues of $\Upsilon$ must be one or zero.  This is impossible for this example, since the number of eigenvalues equal to one is not locally constant.
\end{rem}

\begin{rem}
  This domain is also asymptotically nonradial, but we omit the proof since this property is not relevant for closed range.
\end{rem}

\begin{rem}
  The conclusion of Proposition \ref{prop:interesting_example} is locally independent of the choice of hermitian metric, so $1$-pseudoconvexity will fail in any metric.  To see this, fix $p\in K$ with an open neighborhood $U$ and an arbitrary hermitian metric $g$ on $U$.  Suppose there exists $\Upsilon_g$ satisfying Definition \ref{defn:weak Z(q)} for weak $Z(2)$ on $U$.  Our Levi-form in this new metric can be computed in terms of the Euclidean Levi-form by $\mathcal{L}^g=\bar G^T\mathcal{L} G$ for some invertible matrix $G$.  Let $\lambda>0$ be a smooth function that is at least as large as the largest eigenvalue of the positive semi-definite matrix $G(I-\Upsilon_g)\bar G^T$, and define $\Upsilon=I-\lambda^{-1} G(I-\Upsilon_g)\bar G^T$.  Then all eigenvalues of $\Upsilon$ are bounded between zero and one,
  \[
    \Tr\mathcal{L}-\sum_{j,k=1}^2\Upsilon^{\bar k j}\mathcal{L}_{j\bar k}=\Tr((I-\Upsilon)\mathcal{L})=\lambda^{-1}\Tr((I-\Upsilon_g)\mathcal{L}^g)=\lambda^{-1}(\Tr\mathcal{L}^g-\sum_{j,k=1}^2\Upsilon^{\bar k j}_g\mathcal{L}^g_{j\bar k})\geq 0,
  \]
  and $\Tr\Upsilon<2$, so $\Upsilon$ satisfies Definition \ref{defn:weak Z(q)} on $U$.  Further, if $v$ is an eigenvector of $\Upsilon$ with eigenvalue of one, then $v_g=\bar G^T v$ is an eigenvalue of $\Upsilon_g$ with eigenvalue one, and vice-versa since $G$ is invertible.  Using Proposition \ref{prop:interesting_example}, $\Upsilon_g$ must have one eigenvalue equal to one on $K\cap U$, but there must also exist a point in $U$ where $\Upsilon_g$ has no eigenvalues equal to one.
\end{rem}

\begin{proof}
  We will first show that there exists $C>0$ such that $|\nabla\rho|\geq C(1+(|z_1|^2+|z_2|^2)^{5/2})$ when $\rho(z)=0$.  Suppose there exists a sequence $\{z^j\}$ in $\partial\Omega$ such that $|\nabla\rho(z^j)|<\frac{1}{j}(1+(|z_1^j|^2+|z_2^j|^2)^{5/2})$.  Since $\frac{\partial\rho}{\partial z_3}=-\frac{1}{2i}$, we have $|\nabla\rho(z^j)|\geq 1$, so we must have $(z_1^j,z_2^j)\rightarrow\infty$.  Let $(v_1,v_2)$ be a limit point of $\left\{\frac{(z_1^j,z_2^j)}{\sqrt{|z_1^j|^2+|z_2^j|^2}}\right\}$, and note that this must be a unit-length vector.  Since
  \[
    \frac{\partial\rho}{\partial z_2}=\bar{z}_2(2x+(x^2+|z_2|^2)^2+2(x^2+|z_2|^2)y^2),
  \]
  homogeneity and degree considerations establish that $\bar{v}_2(((\re v_1)^2+|v_2|^2)^2+2((\re v_1)^2+|v_2|^2)(\im v_1)^2)=0$, which is only possible if $v_2=0$.  Further differentiating, we have
  \[
    \frac{\partial\rho}{\partial y}=-4xy^3+y\left(2|z_2|^4+4x^2|z_2|^2+\frac{1}{10}(y^2-5x^2)^2\right),
  \]
  so $\frac{1}{10}(\im v_1)((\im v_1)^2-5(\re v_1)^2)^2=0$, which implies either $\im v_1=0$ or $\im v_1=\pm\sqrt{5}\re v_1$.  Finally, we check
  \[
    \frac{\partial\rho}{\partial x}=2|z_2|^2-y^4+x\left(\frac{19}{10}x^4+4x^2|z_2|^2+2|z_2|^4+4|z_2|^2y^2+5x^2y^2-\frac{1}{2}y^4\right),
  \]
  which implies $(\re v_1)\left(\frac{19}{10}(\re v_1)^4+5(\re v_1)^2(\im v_1)^2-\frac{1}{2}(\im v_1)^4\right)=0$.  Substituting either $\im v_1=0$ or $\im v_1=\pm\sqrt{5}\re v_1$ immediately implies $\re v_1=0$.  We conclude that $v=0$, contradicting the assumption that $v$ has unit length.  Hence, there must exist $C>0$ such that $|\nabla\rho|\geq C(1+(|z_1|^2+|z_2|^2)^{5/2})$ when $\rho(z)=0$.

  Now, we observe that for any $k\geq 1$, $|\nabla^k\rho|\leq O(1+(|z_1|^2+|z_2|^2)^{3-k/2})$, so $\frac{|\nabla^k\rho|}{|\nabla\rho|}\leq O(1+(|z_1|^2+|z_2|^2)^{1/2-k/2})$.  By the Main Theorem of \cite{HaRa13}, the domain defined by \eqref{eq:interesting_example} has a uniformly $C^\infty$ defining function.

  To understand the remaining properties, we differentiate \eqref{eq:interesting_example} (recall that the last four terms of $Q(z_1,z_2)$ sum to a pluriharmonic function, and can be neglected) to find
  \begin{multline*}
    \ddbar\rho=(-3xy^2+(x^2+|z_2|^2)^2+2x^2(x^2+|z_2|^2)+(x^2+|z_2|^2)y^2+2x^2y^2) dz_1\wedge d\bar{z}_1\\
    +z_2(1+2x(x^2+y^2+|z_2|^2)-2iy(x^2+|z_2|^2))dz_1\wedge d\bar{z}_2\\
    +\bar{z}_2(1+2x(x^2+y^2+|z_2|^2)+2iy(x^2+|z_2|^2))dz_2\wedge d\bar{z}_1\\
    +(2x+(x^2+|z_2|^2)^2+2(x^2+|z_2|^2)|z_2|^2+2(x^2+|z_2|^2)y^2+2|z_2|^2y^2)dz_2\wedge d\bar{z}_2.
  \end{multline*}

  The only component of $\partial\rho$ that we have yet to compute is $\frac{\partial\rho}{\partial z_3}=\frac{i}{2}$.  If we normalize our metric so that $|dz_j|^2=2$, then
  \[
    |d\rho(z)|^2=2|\partial\rho(z)|^2=4\sum_{j=1}^3|\rho_j(z)|^2=1+4|\rho_1(z)|^2+4|\rho_2(z)|^2.
  \]
  In what follows, we will frequently make use of the identity $\frac{4|\rho_1|^2+4|\rho_2|^2}{|d\rho|+1}=|d\rho|-1$.  Using this, we can construct orthonormal coordinates
  \begin{align*}
    u_1&=|d\rho|^{-1}\left(1+\frac{4|\rho_2|^2}{|d\rho|+1},-\frac{4\rho_1\overline{\rho_2}}{|d\rho|+1},2i\rho_1\right),\\
    u_2&=|d\rho|^{-1}\left(-\frac{4\overline{\rho_1}\rho_2}{|d\rho|+1},1+\frac{4|\rho_1|^2}{|d\rho|+1},2i\rho_2\right),\\
    u_3&=|d\rho|^{-1}(2\overline{\rho_1},2\overline{\rho_2},-i).
  \end{align*}
  Since $u_3$ is the complex normal vector to a uniformly $C^\infty$ domain, each component must be a uniformly $C^\infty$ function.  Note that $u_1$ and $u_2$ can be expressed entirely in terms of the coefficients of $u_3$.  For example, the first component of $u_1$ can be written $i\frac{-i}{|d\rho|}+\frac{\abs{2\overline{\rho_2}/|d\rho|}^2}{1+i(-i/|d\rho|)}$.  In each case, the denominator of $1+|d\rho|^{-1}$ is uniformly bounded away from zero, so each coefficient must also be uniformly $C^\infty$.  Thus, we can work in these coordinates without sacrificing any conclusions about uniform smoothness.

  As a notational convenience, we write $u_j=(u_j^1,u_j^2,u_j^3)$.  Using our orthonormal coordinates, we can compute the Levi-form as follows:
  \[
    \mathcal{L}_{j\bar k}=\sum_{\ell,m=1}^3u_j^\ell\rho_{\ell\bar m}\overline{u_k^m}.
  \]
  Since all of the terms with $\ell=3$ or $m=3$ vanish, we can rewrite this as a product of $2\times 2$ matrices, so that $\mathcal{L}=U\begin{pmatrix}\rho_{1\bar 1}&\rho_{1\bar 2}\\\rho_{2\bar 1}&\rho_{2\bar 2}\end{pmatrix}\bar U^T$, where $U=\begin{pmatrix}u_1^1&u_1^2\\u_2^1&u_2^2\end{pmatrix}$.  Note that $U$ is also hermitian, so the conjugate transpose is not necessary in this expression.

  Using this, we deduce that the Levi-form is trivial only when $\ddbar\rho=0$.  When $x\geq 0$, $\rho_{2\bar 2}=0$ if and only if $x=0$ and $z_2=0$.  When $x\leq 0$, $\rho_{1\bar 1}=0$ if and only if $x=0$ and $z_2=0$.  Hence, regardless of the sign of $x$, we can only have $\ddbar\rho=0$ if $x=0$ and $z_2=0$. We easily check that the converse is also true, so we define
  \[
    K_0=\{z\in\partial\Omega:x=0\text{ and }z_2=0\},
  \]
  which is the set where the Levi-form is trivial.

  To prove that $\Omega$ satisfies weak $Z(2)$, we will break the problem down into three steps.  First, we will show that $\Omega$ is pseudoconvex on some set that is bounded a uniform distance away from $K_0$, so we can use $\Upsilon=0$ on this set.  Next, we will construct $\Upsilon$ with rank $1$ on a uniform neighborhood of $K_0$ provided that $|y|$ is bounded away from zero.  Finally, we will use the construction from \cite{HaRa13} to cover a uniform neighborhood of $K_0$ when $|y|$ is also uniformly bounded.  We will see that by checking these three cases in this order, we can guarantee that the three sets overlap in a uniform way, so it will be possible to patch each $\Upsilon$ together and obtain a global $\Upsilon$.

  We begin by considering a set where $\Omega$ is pseudoconvex.  Because the Levi-form is defined by conjugating a submatrix of the complex hessian of $\rho$ by $U$, we can see that $\Omega$ is pseudoconvex precisely at points where $\begin{pmatrix}\rho_{1\bar 1}&\rho_{1\bar 2}\\\rho_{2\bar 1}&\rho_{2\bar 2}\end{pmatrix}$ is positive semi-definite.  Suppose $\sqrt{x^2+|z_2|^2}\geq R_0$ for some $R_0>0$ to be determined later.  Then $-3xy^2\geq -3R_0^{-1}(x^2+|z_2|^2)y^2$ and $2x\geq-2R_0^{-3}(x^2+|z_2|^2)^2$, so
  \[
    \rho_{1\bar 1}\geq(x^2+|z_2|^2)^2+2x^2(x^2+|z_2|^2)+(1-3R_0^{-1})(x^2+|z_2|^2)y^2+2x^2y^2,
  \]
  and
  \[
    \rho_{2\bar 2}\geq(1-2R_0^{-3})(x^2+|z_2|^2)^2+2(x^2+|z_2|^2)|z_2|^2+2(x^2+|z_2|^2)y^2+2|z_2|^2y^2.
  \]
  These are both positive provided that $R_0\geq 3$.  Similarly, since $1\leq R_0^{-3}(x^2+|z_2|^2)^{3/2}$,
  \[
    (1+2x(x^2+y^2+|z_2|^2))^2\leq (R_0^{-6}+R_0^{-3})(x^2+|z_2|^2)^3+4x^2(1+R_0^{-3})(x^2+y^2+|z_2|^2)^2,
  \]
  where we have used $(s+t)^2\leq (1+R_0^3)s^2+(1+R_0^{-3})t^2$.  Hence
  \begin{multline*}
    |\rho_{1\bar 2}|^2\leq|z_2|^2\left(4x^2(x^2+y^2+|z_2|^2)^2+4y^2(x^2+|z_2|^2)^2\right)\\
    +O(R_0^{-3}(x^2+|z_2|^2)^2(x^2+y^2+|z_2|^2)^2).
  \end{multline*}
  Using our estimates on $\rho_{j\bar k}$, we expand $\det (\rho_{j\bar k})$ as a polynomial  in $y^2$ and observe that
  \[
    \rho_{1\bar 1}\rho_{2\bar 2}-|\rho_{1\bar 2}|^2\geq a+by^2+cy^4-O(R_0^{-1}(x^2+|z_2|^2)^2(x^2+y^2+|z_2|^2)^2),
  \]
  where $a$, $b$, and $c$ are all polynomials in $x$ and $|z_2|^2$.  We compute
  \begin{align*}
    a&=3(x^2+|z_2|^2)^4, \\
    b&=3(x^2+|z_2|^2)^2(3x^2+|z_2|^2),\\
 	\intertext{and}
    c&=2(x^2+|z_2|^2)(3x^2+2|z_2|^2).
  \end{align*}
  Hence, we can choose $R_0$ sufficiently large so that $\rho_{1\bar 1}\rho_{2\bar 2}-|\rho_{1\bar 2}|^2>0$.  This implies that $\Omega$ is strictly pseudoconvex when $\sqrt{x^2+|z_2|^2}\geq R_0$.  On this set, we can use the trivial value $\Upsilon_0=0$.

  For $R_1>R_0$ and $Y_1>0$, we will now consider the set where $\sqrt{x^2+|z_2|^2}\leq R_1$ and $|y|\geq Y_1$.  On this set, define $\Upsilon_1=\frac{4\lambda}{|d\rho|^2}\begin{pmatrix}|\rho_2|^2&-\rho_1\overline{\rho_2}\\-\overline{\rho_1}\rho_2&|\rho_1|^2\end{pmatrix}$, where $\lambda=1-\frac{150y^2-100}{y^{10}+100y^8}$.  Note that we can choose $Y_1$ sufficiently large so that $100 y^{-8}<1-\lambda<200 y^{-8}$.  This guarantees $0<\lambda<1$ if $Y_1$ is sufficiently large.  On such a set, $\lambda$ and its derivatives are all uniformly bounded.  The same is also true for $\frac{\rho_1}{|d\rho|}$ and $\frac{\rho_2}{|d\rho|}$, so $\Upsilon_1$ must be uniformly smooth.  We have $\det\Upsilon_1=0$ and $\Tr\Upsilon_1=\frac{4\lambda(|\rho_1|^2+|\rho_2|^2)}{|d\rho|^2}$, so the eigenvalues of $\Upsilon_1$ are equal to $0$ and $\Tr\Upsilon_1$.  Since we have $0<\Tr\Upsilon_1<1$, it remains to consider $\Tr\mathcal{L}-\sum_{j,k=1}^2\Upsilon_1^{\bar k j}\mathcal{L}_{j\bar k}$.  To compute the trace of the Levi-form, we use
  \[
    \Tr\mathcal{L}=\sum_{j,\ell,m=1}^2u_j^\ell\rho_{\ell\bar m}\overline{u_j^m}.
  \]
  Thus, computing the trace requires computing $\bar U^T U=\begin{pmatrix}|u_1^1|^2+|u_2^1|^2&u_1^2\overline{u_1^1}+u_2^2\overline{u_2^1}\\u_1^1\overline{u_1^2}+u_2^1\overline{u_2^2}&|u_2^2|^2+|u_1^2|^2\end{pmatrix}$.
  Since the $3\times 3$ matrix $(u_j^k)$ is necessarily a unitary matrix, we have
  \begin{equation}
  \label{eq:U_squared}
    \bar U^T U=\begin{pmatrix}1-|u_3^1|^2&-u_3^2\overline{u_3^1}\\-u_3^1\overline{u_3^2}&1-|u_3^2|^2\end{pmatrix}=|d\rho|^{-2}\begin{pmatrix}1+4|\rho_2|^2&-4\rho_1\overline{\rho_2}\\-4\overline{\rho_1}\rho_2&1+4|\rho_1|^2\end{pmatrix}
  \end{equation}
  Therefore
  \[
    \Tr\mathcal{L}=|d\rho|^{-2}\left((1+4|\rho_2|^2)\rho_{1\bar 1}-8\re(\overline{\rho_1}\rho_2\rho_{1\bar 2})+(1+4|\rho_1|^2)\rho_{2\bar 2}\right).
  \]
  Since $\begin{pmatrix}\overline{\rho_2}\\-\overline{\rho_1}\end{pmatrix}$ is an eigenvector of $U$ with eigenvalue $1$, we have $\bar U^T\Upsilon_1 U=\Upsilon_1$, so
  \[
    \sum_{j,k=1}^2\Upsilon_1^{\bar k j}\mathcal{L}_{j\bar k}=4\lambda|d\rho|^{-2}(|\rho_2|^2\rho_{1\bar 1}-2\re(\overline{\rho_1}\rho_2\rho_{1\bar 2})+|\rho_1|^2\rho_{2\bar 2}).
  \]
  Combining these computations, we have
  \begin{multline*}
    \Tr\mathcal{L}-\sum_{j,k=1}^2\Upsilon_1^{\bar k j}\mathcal{L}_{j\bar k}\\
    =|d\rho|^{-2}\left((1+4(1-\lambda)|\rho_2|^2)\rho_{1\bar 1}-8(1-\lambda)\re(\overline{\rho_1}\rho_2\rho_{1\bar 2})+(1+4(1-\lambda)|\rho_1|^2)\rho_{2\bar 2}\right).
  \end{multline*}
  To estimate this quantity, it will be helpful to observe that whenever $m_2>m_1$, we have $|y|^{m_1}\leq Y_1^{m_1-m_2}|y|^{m_2}\leq O(|y|^{m_2})$ and $(x^2+|z_2|^2)^{m_2/2}\leq R_1^{m_2-m_1}(x^2+|z_2|^2)^{m_1/2}\leq O((x^2+|z_2|^2)^{m_1/2})$, since $R_1$ is fixed and $Y_1$ can be taken arbitrarily large.  With this in mind, we estimate
  \[
    \abs{\rho_1+\frac{1}{2}y^4+\frac{1}{4}xy^4+\frac{i}{20}y^5}\leq O((x^2+|z_2|^2)^{1/2}y^3).
  \]
  From this, we immediately obtain $|\rho_1|\leq O(y^5)$ and
  \[
    |\rho_1|^2\geq\frac{y^8}{4}+\frac{y^{10}}{400}-O((x^2+|z_2|^2)^{1/2}y^8).
  \]
  We will also need $|\rho_2|\leq O((x^2+|z_2|^2)y^2)$ and $|\rho_{1\bar 2}|\leq O((x^2+|z_2|^2)^{1/2}y^2)$
  From these, we have
  \[
    \re(\overline{\rho_1}\rho_2\rho_{1\bar 2})\leq O((x^2+|z_2|^2)^{3/2}y^9).
  \]
  Since most of the terms in $\rho_{1\bar 1}$ and $\rho_{2\bar 2}$ are positive, we won't need error terms to estimate $\rho_{1\bar 1}\geq-3xy^2$ and $\rho_{2\bar 2}\geq 2x+(2x^2+4|z_2|^2)y^2$.  Since $1+4(1-\lambda)|\rho_j|^2\geq 0$ for $j\in\{1,2\}$, we can first use these lower bounds on $\rho_{1\bar 1}$ and $\rho_{2\bar 2}$ before estimating the remaining error terms.  Since $|1-\lambda|\leq 200 y^{-8}$, we have
  \begin{multline*}
    \Tr\mathcal{L}-\sum_{j,k=1}^2\Upsilon_1^{\bar k j}\mathcal{L}_{j\bar k}
    \geq|d\rho|^{-2}\left(-3xy^2+(1+4(1-\lambda)|\rho_1|^2)(2x+(2x^2+4|z_2|^2)y^2)\right)\\
    -O(|d\rho|^{-2}(x^2+|z_2|^2)y^2).
  \end{multline*}
  Substituting our lower bound for $|\rho_1|^2$, we are left with
  \begin{multline*}
    \Tr\mathcal{L}-\sum_{j,k=1}^2\Upsilon_1^{\bar k j}\mathcal{L}_{j\bar k}
    \geq|d\rho|^{-2}\left(-3xy^2+2x\right)\\
    +4(1-\lambda)|d\rho|^{-2}\left(2x\left(\frac{y^8}{4}+\frac{y^{10}}{400}\right)+(2x^2+4|z_2|^2)\frac{y^{12}}{400}\right)
    -O(|d\rho|^{-2}(x^2+|z_2|^2)y^2).
  \end{multline*}
  The value of $\lambda$ has been chosen so that all terms that are linear in $x$ will cancel, leaving us with
  \[
    \Tr\mathcal{L}-\sum_{j,k=1}^2\Upsilon_1^{\bar k j}\mathcal{L}_{j\bar k}
    \geq 4(1-\lambda)|d\rho|^{-2}\left((2x^2+4|z_2|^2)\frac{y^{12}}{400}\right)
    -O(|d\rho|^{-2}(x^2+|z_2|^2)y^2).
  \]
  Since $1-\lambda\geq 100 y^{-8}$, we can choose $Y_1$ sufficiently large so that this quantity is greater than or equal to zero.

  For our final region, we choose $R_2=R_1$ and $Y_2>Y_1$.  When $\sqrt{x^2+|z_2|^2}\leq R_2$ and $|y|\leq Y_2$ we set
  \[
    \Upsilon_2=I-\big(2(1+4|\rho_1|^2)+3y^2(1+4|\rho_2|^2)\big)^{-1} \left(\bar U^T\right)^{-1}\begin{pmatrix}2&0\\0&3y^2\end{pmatrix}U^{-1}.
  \]
  Since
  \[
    \det U=|d\rho|^{-2}\left(1+\frac{4|\rho_1|^2}{|d\rho|+1}+\frac{4|\rho_2|^2}{|d\rho|+1}\right)=|d\rho|^{-1},
  \]
  we can invert \eqref{eq:U_squared} to obtain
  \[
    U^{-1}\left(\bar U^T\right)^{-1}=\begin{pmatrix}1+4|\rho_1|^2&4\rho_1\overline{\rho_2}\\4\overline{\rho_1}\rho_2&1+4|\rho_2|^2\end{pmatrix},
  \]
  so we can compute
  \[
    \Tr\Upsilon_2=2-(2(1+4|\rho_1|^2)+3y^2(1+4|\rho_2|^2))^{-1}(2(1+4|\rho_1|^2)+3y^2(1+4|\rho_2|^2))=1.
  \]
  Since $I-\Upsilon_2$ is positive semi-definite, each eigenvalue of $\Upsilon_2$ must be at most one.  Two eigenvalues less than or equal to one can only add to one if both eigenvalues are also greater than or equal to zero, so each eigenvalue of $\Upsilon_2$ lies on the interval $[0,1]$.  We check
  \[
    \Tr\mathcal{L}-\sum_{j,k=1}^2\Upsilon_2^{\bar k j}\mathcal{L}_{j\bar k}=(2(1+4|\rho_1|^2)+3y^2(1+4|\rho_2|^2))^{-1}(2\rho_{1\bar 1}+3y^2\rho_{2\bar 2})\geq 0,
  \]
  since the only potentially negative terms in $\rho_{1\bar 1}$ and $\rho_{2\bar 2}$ are $-3xy^2$ and $2x$, respectively.  Since the denominator is bounded below by 2 and $\Upsilon_2$ is only defined on a compact set, $\Upsilon_2$ is uniformly smooth.

  We are now ready to assemble our $\Upsilon$.  Let $\chi\in C^\infty(\mathbb{R})$ be a nondecreasing function satisfying $\chi(t)=0$ when $t\leq 0$, $\chi(t)=1$ when $t\geq 1$, and $1-\chi(t)=\chi(1-t)$.  Define
  \[
    \Upsilon=\chi\left(\frac{y^2-Y_1^2}{Y_2^2-Y_1^2}\right)\chi\left(\frac{R_1^2-x^2-|z_2|^2}{R_1^2-R_0^2}\right)\Upsilon_1+\chi\left(\frac{Y_2^2-y^2}{Y_2^2-Y_1^2}\right)\chi\left(\frac{R_2^2-x^2-|z_2|^2}{R_2^2-R_0^2}\right)\Upsilon_2.
  \]
  This will satisfy all of the necessary properties of Definition \ref{defn:weak Z(q)}, so $\Omega$ satisfies weak $Z(2)$.

  For our negative result, we will examine the Levi-form in a neighborhood of $K_0$.  Fix $y$, and assume that $x^2+|z_2|^2<R$ for some $R>0$.  Since $|\rho_2(z)|\leq O(x^2+|z_2|^2)$, it follows that
  \[
    U=\frac{1}{\sqrt{1+4|\rho_1|^2}}\begin{pmatrix}1&0\\0&1+\frac{4|\rho_1|^2}{\sqrt{1+4|\rho_1|^2}+1}\end{pmatrix}+O(x^2+|z_2|^2).
  \]
  Note that $1+\frac{4|\rho_1|^2}{\sqrt{1+4|\rho_1|^2}+1}=\sqrt{1+4|\rho_1|^2}$, so we have
  \[
    \mathcal{L}=\frac{1}{1+4|\rho_1|^2}\begin{pmatrix}-3xy^2&z_2\sqrt{1+4|\rho_1|^2}\\\bar{z}_2\sqrt{1+4|\rho_1|^2}&2x(1+4|\rho_1|^2)\end{pmatrix}+O(x^2+|z_2|^2).
  \]

  Suppose that $\Upsilon$ satisfies the requirements of Definition \ref{defn:weak Z(q)} for weak $Z(2)$.  When $x=0$, we have
  \[
    \Tr\mathcal{L}-\sum_{j,k=1}^2\Upsilon^{\bar k j}\mathcal{L}_{j\bar k}=-2\re\left(\Upsilon^{\bar 2 1}\frac{z_2}{\sqrt{1+4|\rho_1|^2}}\right)+O(|z_2|^2)
  \]
  This quantity must be nonnegative for $z_2$ near zero and equal to zero when $z_2=0$, so the linear terms in $z_2$ must vanish.  Hence $\Upsilon^{\bar 2 1}=0$ on $K_0$.  On the other hand, when $z_2=0$ we have
  \[
    \Tr\mathcal{L}-\sum_{j,k=1}^2\Upsilon^{\bar k j}\mathcal{L}_{j\bar k}=\frac{-3xy^2}{1+4|\rho_1|^2}(1-\Upsilon^{\bar 1 1})+2x(1-\Upsilon^{\bar 2 2}) + O(x^2).
  \]
  As before, this must be nonnegative for $x$ near zero and equal to zero when $x=0$, so we must have $\frac{-3y^2}{1+4|\rho_1|^2}(1-\Upsilon^{\bar 1 1})+2(1-\Upsilon^{\bar 2 2})=0$ on $K_0$.

  On $K_0$, we now know that $\Upsilon$ is diagonal, so the eigenvalues must be $\Upsilon^{\bar 1 1}$ and $\Upsilon^{\bar 2 2}$.  When $y=0$, we must have $\Upsilon^{\bar 2 2}=1$, so since $\Tr\Upsilon\neq 2$ by assumption, we must have $\Upsilon^{\bar 1 1}\neq 1$.  When $y\neq 0$, we have $\Upsilon^{\bar 1 1}=1$ if and only if $\Upsilon^{\bar 2 2}=1$, so $\Tr\Upsilon\neq 2$ implies that neither eigenvalue can equal 1.  This proves our negative result, since $\Upsilon$ has exactly one eigenvalue equal to 1 on the set
  \[
    K=\{z\in\partial\Omega:z_1=z_2=0\},
  \]
  but no eigenvalues equal to 1 on the set $K_0\backslash K$.

\end{proof}

\bibliographystyle{alpha}
\bibliography{mybib}

\end{document}